\title{Bredon cohomology and robot motion planning}
\date{}                                           
\author{Michael Farber}
\address{School of Mathematical Sciences \\
Queen Mary, University of London\\
London, E1 4NS\\
United Kingdom}
\email{m.farber@qmul.ac.uk}
\thanks{Michael Farber was partially supported by the EPSRC, by the IIAS and by the Marie Curie Actions, FP7, in the frame of the EURIAS Fellowship Programme}
\author{Mark Grant}
\address{Institute of Pure and Applied Mathematics \\
University of Aberdeen\\
Aberdeen AB24 3UE\\
United Kingdom}
\email{mark.grant@abdn.ac.uk}
\author{Gregory Lupton}
\address{Department of Mathematics\\
Cleveland State University\\
Cleveland OH 44115  \\
U.S.A.}
\email{g.lupton@csuohio.edu}
\thanks{
Gregory Lupton and John Oprea were partially supported by grants from the Simons Foundation (\#209575 and 
\#244393). }
\author{John Oprea}
\address{Department of Mathematics\\
Cleveland State University\\
Cleveland OH 44115  \\
U.S.A.}
\email{j.oprea@csuohio.edu}
\thanks{This research was supported through the programme \lq\lq Research in Pairs\rq\rq\,  by the Mathematisches Forschungsinstitut Oberwolfach in 2017}
\begin{document}
\maketitle

\newtheorem{theorem}{Theorem}[section]
\newtheorem{proposition}{Proposition}[subsection]
\newtheorem{lemma}[proposition]{Lemma}
\newtheorem{corollary}[proposition]{Corollary}
\newtheorem{definition}[proposition]{Definition}
\newtheorem{remark}[proposition]{Remark}
\newtheorem{example}[proposition]{Example}
\newcommand {\Ext}{{\rm {Ext}}}
\newcommand {\Hom}{{\rm {Hom}}}
\newcommand{\C}{{{\mathcal C}}}
\newcommand{\R}{{\mathbf R}}

\newcommand{\Q}{{\mathbf Q}}

\newcommand{\D}{{\mathcal D}}
\newcommand{\Z}{{\mathbb Z}}
\newcommand{\uz}{{\underline \Z}}
\newcommand{\uu}{{\mathfrak u}}
\newcommand{\vv}{{\mathfrak v}}
\newcommand{\od}{{\mathcal {O_D}}}
\newcommand{\odd}{{\mathcal {O_{D'}}}}
\newcommand{\ui}{{\underline I}}
\newcommand{\um}{{\underline M}}
\newcommand{\un}{{\underline N}}
\newcommand{\zpp}{{\Z[\pi\times\pi]}}
\newcommand{\zp}{{\Z[\pi]}}
\newcommand{\uc}{{\underline C}}
\newcommand{\tc}{{\sf {TC}}}
\newcommand{\secat}{{\sf {secat}}}
\newcommand{\uP}{{\underline P}}
\newcommand{\cat} {{\sf {cat}}}
\newcommand{\cd}{{\rm {cd}}}
\newcommand{\F}{{\mathcal {F}}}
\newcommand{\ualpha}{{\underline \alpha}}

\begin{abstract} In this paper we study the topological invariant $\tc(X)$ reflecting the complexity of algorithms for autonomous robot motion.
Here, $X$ stands for the configuration space of a system and $\tc(X)$ is, roughly, the minimal number of continuous rules which are needed to construct a motion planning algorithm in $X$. We focus on the case when the space $X$ is aspherical; then the number $\tc(X)$ depends only on the fundamental group 
$\pi=\pi_1(X)$ and we denote it $\tc(\pi)$. We prove that $\tc(\pi)$ can be characterised as the smallest integer $k$ such that the canonical $\pi\times\pi$-equivariant map
of classifying spaces $$E(\pi\times\pi) \to E_\D(\pi\times\pi)$$ can be equivariantly deformed into the $k$-dimensional skeleton of $E_\D(\pi\times\pi)$. 
The symbol $E(\pi\times\pi)$  denotes the classifying space for free actions and  $E_\D(\pi\times\pi)$ denotes the classifying space for actions with isotropy in a certain family $\D$ of subgroups of $\pi\times\pi$. 
Using this result we show how one can estimate $\tc(\pi)$ in terms of the equivariant Bredon cohomology theory. We prove that 
$\tc(\pi) \le  \max\{3, \cd_\D(\pi\times\pi)\},$ where
$\cd_\D(\pi\times\pi)$ denotes the cohomological dimension of $\pi\times\pi$ with respect to the family of subgroups $\D$. We also introduce a Bredon cohomology refinement of the canonical class and prove its universality. Finally we show that for a large class of {\it principal} groups 
(which includes all torsion free hyperbolic groups as well as all torsion free nilpotent groups)
the essential cohomology classes in the sense of Farber and Mescher \cite{FM} are exactly the classes having Bredon cohomology extensions with respect to the family $\D$. 
\end{abstract}

%
%
%

\section{Introduction} 

{\it The topological complexity}, 
$\tc(X)$, is a numerical 
homotopy invariant of a path-connec\-ted topological space $X$, originally introduced in \cite{Far03} (see also \cite{Finv})
which is motivated by the motion planning problem of robotics. Roughly, $\tc(X)$ is the minimal number of continuous rules which are needed to construct an algorithm for autonomous motion planning of a system having $X$ as its configuration space.

To give more detail, assume that a system (robot) has to be programmed to move autonomously from any initial state to any final state. 
Let $X$ denote the configuration space of the system; points of $X$ represent states of the system and 
continuous paths 
in $X$ represent 
motions of the system.
{\it A motion planning algorithm} is a function which associates with any pair of states $(A, B)\in X\times X$ a continuous motion of the system 
starting at $A$ and ending at $B$. In other words, a motion planning algorithm is 
a section of the path fibration 
\begin{eqnarray}\label{fibration}
p: X^I\to X\times X, \quad p(\gamma) = (\gamma(0), \gamma(1)).
\end{eqnarray}
Here $X^I$ denotes the space of all continuous paths $\gamma: I=[0, 1]\to X$ equipped with the compact-open topology. 
It is easy to see that the fibration (\ref{fibration}) admits a continuous section if and only if $X$ is contractible \cite{Far03}.
{\it The topological complexity} $\tc(X)$ is an integer (see Definition \ref{def1} below) reflecting the complexity of 
this fibration. It has several different characterisations, see \cite{Far06}. 
Intuitively, $\tc(X)$ is a measure of the navigational complexity of $X$ viewed as the configuration space of a system. 
$\tc(X)$ is similar in spirit to the classical Lusternik - Schnirelmann category $\cat(X)$.
The invariants $\tc(X)$ and $\cat(X)$ are special cases of a more general notion of genus of a fibration introduced by A. Schwarz \cite{Sv66}. 
A recent survey of 
the concept $\tc(X)$ and robot motion planning algorithms in practically interesting configuration spaces can be found in \cite{Frecent}.

One of the main properties of $\tc(X)$ is its {\it homotopy invariance} \cite{Far03}, i.e. $\tc(X)$ depends only on the homotopy type of $X$. 
This property is helpful for the task of computing $\tc(X)$ in various examples since cohomological tools can be employed. 
In the case when the configuration space $X$ is {\it aspherical}, i.e. $\pi_i(X)=0$ for all $i>1$, the number $\tc(X)$ depends only on the fundamental group 
$\pi=\pi_1(X)$ and it was observed in \cite{Far06} that one should to be able to express $\tc(X)$ in terms of the algebraic properties of the group
$\pi$ alone. This remark justifies the notation $\tc(\pi)$ for $\tc(K(\pi,1))$. 

A similar question for the Lusternik - Schnirelmann category $\cat(X)$ was solved by S. Eilenberg and T. Ganea in 1957 in the seminal paper \cite{EG}. 
Their theorem relates $\cat(X)$ and the cohomological dimension of the fundamental group $\pi$ of $X$. 

The problem of computing $\tc(\pi)$ as an algebraic invariant of the group $\pi$ has attracted the attention of many mathematicians and many interesting partial results have been obtained. It is easy to see that $\tc(\pi)=\infty$ if $\pi$ has torsion; therefore we shall always restrict our attention to torsion free groups $\pi$.

The initial papers \cite{Far03}, \cite{Far06} contained computations of $\tc(X)$ for graphs, closed orientable surfaces and tori. In \cite{FarberYuzvinsky} the number $\tc(X)$ was computed for the case when $X$ is the configuration space of many particles moving on the plane without collisions. 
D. Cohen and G. Pruidze  \cite{CohenPruidze} calculated the topological complexity of complements of general position arrangements and Eilenberg -- MacLane spaces associated to certain right-angled Artin groups.

In a recent breakthrough, the topological complexity of closed non-orientable surfaces of genus $g \geq 2$ was computed by A. Dranishnikov for $g \geq 4$ in \cite{Dranish} and by D. Cohen and L. Vandembroucq for $g=2, 3$ in \cite{CohenVandem}. In both these articles it is shown that $\tc(\pi)$ attains its maximum, i.e. coincides with $\mathrm{cd}(\pi \times \pi)$, the cohomological dimension of the group $\pi\times\pi$. 

The estimates of M. Grant \cite{Grant} give good upper bounds for $\tc(\pi)$ for nilpotent fundamental groups $\pi$. 
In  \cite{GrantLuptonOprea}, M. Grant, G. Lupton and J. Oprea proved that $\tc(\pi)$ is bounded below by the cohomological dimension 
of $A \times B$ where $A$ and $B$ are  subgroups of $\pi$ whose conjugates intersect trivially. Using these estimates, M. Grant and D. Recio-Mitter \cite{GrantRecio}
have computed $\tc(\pi)$ for certain subgroups of Artin's braid groups. 

Y. Rudyak \cite{Rudyak} went in the opposite direction by showing that for any pair of positive integers $k, \ell$ satisfying $k\le \ell \le 2k$ there exists a finitely presented group $\pi$ such that $\cd(\pi)=k$ and $\tc(\pi)= \ell.$ 

In a recent preprint \cite{FM} M. Farber and S. Mescher showed that for a large class of groups (including all torsion free hyperbolic groups) the topological complexity $\tc(\pi)$ equals either $\cd(\pi\times\pi)$ or $\cd(\pi\times\pi)-1$. Since hyperbolic groups are typical in many models of random groups this gives an answer with possible error 1 for a typical group. Note that $\cd(\pi\times\pi)$ is obviously an upper bound for $\tc(\pi)$ 
for any $\pi$.  

In this paper we tackle the general problem of understanding $\tc(\pi)$ from a different direction, using the tools of equivariant topology. 
We are not interested in computing examples, but rather in reformulating the problem itself so that interactions with subjects such as group theory and homological algebra become apparent. This re-interpretation, together with previously computed examples of $\tc(\pi)$, provides illustrative examples for Bredon cohomology with respect to a family of subgroups.

Firstly we reduce the problem to a question about classifying spaces of families of subgroups. Namely, we define a special class $\D$ of subgroups of $G=\pi\times\pi$ and prove that the number 
$\tc(\pi)$ coincides with the smallest $k$ such that the canonical map of classifying spaces $$E(G)\to E_\D(G)$$ can be factored through 
a $G$-CW-complex of dimension $\le k$. Here $E(G)$ is the classical classifying space for free $G$-actions and $E_\D(G)$ is the classifying space for $G$-actions with isotropy subgroups in the class $\D$. 
Using this reduction we establish an upper bound
\begin{eqnarray}
\tc(\pi)\le \max\{3, \cd_\D(\pi\times\pi)\}.
\end{eqnarray}
where $\cd_\D(\pi\times\pi)$ denotes the cohomological dimension of $\pi\times\pi$ with respect to the family $\D$. 
Secondly, we use Bredon cohomology to produce lower bounds for $\tc(\pi)$. Namely we show that if 
(for some $\od$-module $\um$)
there exists a Bredon cohomology class 
$$\underline \alpha\in H^n_\D(\pi\times\pi, \um)$$
such that the cohomology class 
$$\Phi(\underline \alpha)\not=0\in H^n(\pi\times\pi, M)$$
is nonzero, 
then $\tc(X) \ge n$. Here $M$ denotes the principal component of $\um$ and $$\Phi: H^n_\D(\pi\times\pi, \um) \to H^n(\pi\times\pi, M)$$ is a natural homomorphism from the Bredon cohomology to the usual twisted cohomology. The notions we use here are explained in full detail in the sequel.

We define a \emph{Bredon cohomology generalisation of the canonical class} $\uu\in H^1_\D(\pi\times\pi;\underline{I})$  which refines the canonical class $\vv\in H^1(\pi\times\pi;I)$ introduced in \cite{CosFar}. We prove a universality theorem for
the powers of the Bredon canonical class $\uu$ which implies, in particular, that $\cd_\D(\pi\times\pi) = {\rm {height}}(\uu)$ (the \emph{height} of a cohomology class is the exponent of the maximal non-vanishing power). 

In \cite{FM}, Farber and Mescher introduced the notion of {\it an essential cohomology class} as a class $\beta\in H^n(\pi\times\pi, A)$ which can be obtained via a coefficient homomorphism $\mu: I^n\to A$ from the power $\vv^n$ of the canonical class, i.e. $\beta=\mu_\ast(\vv^n)$. 
In this paper we introduce 
a class of {\it principal groups} $\pi$ and we show that for principal groups a cohomology class in $H^n(\pi\times\pi;M)$ is in the image of the map $\Phi$ from Bredon cohomology if and only if it is essential. We also prove that the class of principal groups includes all torsion free hyperbolic groups and all torsion free nilpotent groups.

Curiously, the fundamental group of the Klein bottle is not principal (see Example \ref{Klein}) but nevertheless for this group  
$$\tc(\pi) = {\rm {height}}(\vv) $$
as follows from the theorem of D. Cohen and L. Vandembroucq \cite{CohenVandem}. 

While results of this paper are more conclusive for $\tc(\pi)\ge 3$,  
we mention that $\Z$ is the only group satisfying $\tc(\pi)=1$ (as follows from \cite{GLO1}) and 
 groups with $\tc(\pi)=2$ are likely quite restricted, see \cite{BR}.
The obvious examples of groups $\pi$ with $\tc(\pi)=2$ include $\Z^2$ and the non-commutative free group $F$.

%

\section{The first reduction}

\subsection{The concept of topological complexity}

We start by recalling the definition of the invariant $\tc(X)$. 

\begin{definition}\label{def1}{\rm
Given a path-connected topological space $X$, the topological complexity of $X$ is the minimal integer 
$\tc(X)=k$ such that the Cartesian product $X \times X$ can be covered by $k+1$ open subsets
$$X \times X = U_0 \cup U_1 \cup \cdots \cup U_k$$ with the property that for any $i = 0, 1,2,\dots ,k$ there exists a continuous section of the fibration
(\ref{fibration})
$$s_i: U_i \to X^I,\quad
p\circ s_i={\rm incl}_{U_i}$$ over $U_i$. If no such $k$ exists we will set $\tc(X)=\infty$.
}
\end{definition}

Note that in this paper we are using {\it the reduced version} of $\tc(X)$ which is one less than the original notion used in \cite{Far03}, \cite{Finv}, \cite{Frecent}
and \cite{FM}. 

For convenience of the reader we also recall the notion of the Schwarz genus of a fibration (also known as {\it the sectional category}). 
\begin{definition}\label{def12}{\rm
Let $p: E\to B$ be a Serre fibration over a path-connected topological space $B$. The Schwarz genus of $p$ is defined as the smallest integer $k$ such that
the base $B$ admits an open cover $B= U_0\cup U_1\cup \cdots \cup U_k$ such that the fibration $p$ admits a continuous section over $U_i$ 
for each $i=0, 1, \dots, k$. 
}
\end{definition}


This paper is mainly dedicated to the problem of computing $\tc(X)$  in the case when $X$ is an aspherical finite cell complex. 
Recall that a connected cell complex $X$ is said to be {\it aspherical} if $\pi_i(X)=0$ for all $i>1$. The notation $X=K(\pi, 1)$ means that 
$X$ is aspherical and its fundamental group is $\pi$. 
A key property of $\tc(X)$ is its homotopy invariance, see  \cite{Far03}.  
The homotopy invariance of the topological complexity implies that the number $$\tc(\pi)=\tc(K(\pi, 1))$$ depends only on the group $\pi$. 

\subsection{} Many systems of practical interest have aspherical configuration spaces. 
Consider for example the problem of coordinated collision free motion planning of a set of objects on the plane $\R^2$. 
We may represent the objects by 
discs of radius $r>0$ and the state of each disc is determined by the position of its centre $A_i\in \R^2$ where $i=1, \dots, n$. 
Thus a state of the system 
is a configuration of points $(A_1, A_2, \dots, A_n)$, where $A_i\in \R^2$, such that $$|A_i-A_j|>2r, \quad i\not= j.$$ Let $F_r(\R^2, n)$ denote the configuration space of this system. 
It is common to relax the problem and consider instead the weaker condition $$A_i\not=A_j, \quad i\not= j$$ which leads to the usual configuration space 
$F(\R^2, n)$ of $n$ distinct points on the plane. It is easy to see that $F_r(\R^2, n)$ and $F(\R^2, n)$ are homeomorphic and, moreover, it is well known that the space $F(\R^2, n)$ is aspherical as can be seen using the tower of Fadell-Neuwirth fibrations.

%
%
%
%
%
%
%

\subsection{} \label{remarks1} Consider a continuous partial section  $s: U\to X^I$ of the fibration (\ref{fibration}) over a subset $U\subset X\times X$.
Using the exponential correspondence, the map $s$ can be viewed as a homotopy $h:U\times I\to X$ where $h(u, t)=s(u)(t)$ for $u\in U, t\in I$. 
Let $p_j: X\times X\to X$ (where $j=1,2$) denote the projections onto the first and the second factors. 
The property of $s$ being a section can be expressed by saying that the homotopy $h$ connects the projections of  
$U$ onto the first and second coordinates, i.e. $h(u, 0)=p_1(u)$ and $h(u, 1)=p_2(u)$. 

Thus we see that the open sets $U_i\subset X\times X$ which appear in Definition \ref{def1} can be equivalently characterised by the property that their two projections $U_i\to X$ on the first and the second factors are homotopic. 

In the case when the space $X$ is aspherical we can use the following property: 
For a connected space $U$ that is homotopy equivalent to a cell complex, the set of homotopy classes of maps $U\to X$ is in a one-to-one correspondence 
with the set of conjugacy classes of homomorphisms $\pi_1(U, u_0)\to \pi_1(X, x_0)$, see Chapter V, Corollary 4.4 in \cite{Whi}.
Recall that two group homomorphisms $f, g: \pi_1(U, u_0)\to \pi_1(X, x_0)$ are 
conjugate if there exists $\beta\in \pi_1(X, x_0)$ such that for all $\alpha\in \pi_1(U, u_0)$ one has 
$f(\alpha)=\beta g(\alpha)\beta^{-1}$. 

These remarks lead to the following definition:

\begin{definition}\label{def2}{\rm
Let $X$ be a path-connected topological space with fundamental group $\pi=\pi_1(X, x_0)$. {\it The $\D$-topological complexity}, $\tc^\D(X)$, is defined as the minimal number 
$k$ such that $X \times X$ can be covered by $k+1$ open subsets
$X \times X = U_0 \cup U_1 \cup \cdots \cup U_k$ with the property that for any $i = 0, 1,2,\dots ,k$ 
and for every choice of the base point $u_i\in U_i$ the homomorphism $\pi_1(U_i, u_i)\to \pi_1(X\times X, u_i)$ induced by 
the inclusion $U_i\to X\times X$ takes values in a subgroup conjugate to the diagonal $\Delta\subset \pi\times\pi$. 
}
\end{definition}

Recall that 
there is an isomorphism $\pi_1(X\times X, u_i) \to \pi_1(X\times X, (x_0, x_0))=\pi\times \pi$ determined uniquely up to conjugation, and 
the diagonal inclusion $X\to X\times X$ induces the inclusion $\pi\to \pi\times\pi$ onto the diagonal $\Delta$. 

\begin{lemma}\label{lm3}
One has $\tc^\D(X)=\tc(X)$ if $X$ is a finite aspherical cell complex.
\end{lemma}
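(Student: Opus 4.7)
The plan is to verify both inequalities $\tc^\D(X)\le \tc(X)$ and $\tc(X)\le \tc^\D(X)$ using the correspondence noted in \ref{remarks1}: for an open set $U\subset X\times X$, a continuous section $s: U\to X^I$ of the path fibration $p$ is the same thing, via the exponential law, as a free homotopy $h: U\times I\to X$ between the two projections, with $h(\cdot,0)=p_1|_U$ and $h(\cdot,1)=p_2|_U$.

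For $\tc^\D(X)\le \tc(X)$, I would take an open cover $X\times X=U_0\cup\cdots\cup U_k$ admitting sections $s_i: U_i\to X^I$ and verify the $\pi_1$-condition of Definition \ref{def2} on each $U_i$. Fix a basepoint $u=(a,b)\in U_i$; the section gives a path $\gamma(t)=s_i(u)(t)$ in $X$ from $a$ to $b$ and a free homotopy $h$ between $p_1|_{U_i}$ and $p_2|_{U_i}$. Choose any path $\delta_1$ from $x_0$ to $a$ and set $\delta_2=\delta_1\cdot\gamma$; the pair $(\delta_1,\delta_2)$ identifies $\pi_1(X\times X,u)\cong\pi\times\pi$. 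For a loop $\alpha\in\pi_1(U_i,u)$, the map $(s,t)\mapsto h(\alpha(s),t)$ is a free homotopy of loops with trace $\gamma$, which by the standard relation yields $\gamma\cdot(p_2\circ\alpha)\cdot\gamma^{-1}\simeq p_1\circ\alpha$ as loops at $a$. A direct change-of-basepoint computation then shows that the two coordinates of the image of $\alpha$ in $\pi\times\pi$ coincide, so the image lies in $\Delta$ itself.

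For $\tc(X)\le \tc^\D(X)$ I would take an open cover satisfying the $\pi_1$-condition and construct a section by reversing this correspondence, working one path-component at a time. Since the finite CW complex $X\times X$ is locally path-connected, each path-component $V$ of $U_i$ is open, and sections over disjoint opens glue, so it suffices to produce a section over $V$. The $\pi_1$-condition at a basepoint $u\in V$, together with the observation that any conjugate of $\Delta$ in $\pi\times\pi$ has the form $\{(h,\beta h\beta^{-1}):h\in\pi\}$ for some $\beta\in\pi$, translates (by the same basepoint bookkeeping used in the forward direction) to the statement that $(p_1|_V)_*$ and $(p_2|_V)_*:\pi_1(V,u)\to\pi$ are conjugate homomorphisms. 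Since $V$ is an open subset of a finite CW complex it has the homotopy type of a CW complex by Milnor's theorem, and $X=K(\pi,1)$ is aspherical, so the theorem from \cite{Whi} cited in \ref{remarks1} produces a free homotopy $H: V\times I\to X$ from $p_1|_V$ to $p_2|_V$. Its exponential adjoint is the desired continuous section $V\to X^I$.

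The only genuinely technical step is the basepoint bookkeeping proving the equivalence ``the image of $\pi_1(U,u)\to \pi\times\pi$ lies in a conjugate of $\Delta$'' $\Leftrightarrow$ ``$(p_1|_U)_*$ and $(p_2|_U)_*$ are conjugate as homomorphisms $\pi_1(U,u)\to\pi$''. Once this is in place the exponential correspondence (used in both directions) and the classification of maps into a $K(\pi,1)$ by conjugacy classes of $\pi_1$-homomorphisms (which is where asphericity enters, and which is used only in the reverse direction) supply the rest.
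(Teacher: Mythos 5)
Your proof is correct and follows essentially the same route as the paper: the exponential correspondence between partial sections and homotopies of the two projections, the classification of maps into the aspherical $X$ by conjugacy classes of $\pi_1$-homomorphisms (Whitehead), and Milnor's theorem via the ANR argument to guarantee that open subsets of the finite complex $X\times X$ have CW homotopy type. You have merely written out in full the basepoint bookkeeping and the component-by-component gluing that the paper leaves implicit in its reference to \S\ref{remarks1}.
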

\begin{proof}
It follows from the remarks given in \S \ref{remarks1}. Here we use the known fact that an open subset of a finite CW-complex is homotopy equivalent to a countable CW-complex. Indeed, by Theorem 1 of J. Milnor \cite{Milnor}, a space is homotopy equivalent to a countable CW-complex 
if and only if it  is homotopy equivalent to an absolute neighbourhood retract (ANR). Any finite CW-complex is an ANR and an open subset of an ANR is an ANR. Thus, an open subset of a finite CW-complex is an ANR and hence has the homotopy type of a countable CW-complex. 
\end{proof}

\begin{lemma}\label{lm4}
Let $X$ be a finite aspherical cell complex with fundamental group $\pi=\pi_1(X, x_0)$.
Let 
$q: \widehat{X\times X}\to X\times X$ be the connected covering space
corresponding to the diagonal subgroup $$ \Delta\subset \pi\times\pi=\pi_1(X\times X, (x_0, x_0)).$$ 
Then 
the $\D$-topological complexity $\tc^\D(X)$ coincides with the Schwarz genus of $q$. 
\end{lemma}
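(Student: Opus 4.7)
The plan is to observe that the conditions defining $\tc^\D(X)\le k$ and $\secat(q)\le k$ on an open cover $\{U_0,\ldots,U_k\}$ of $X\times X$ are literally the same, via the classical lifting criterion for covering spaces. So the proof reduces to a dictionary between sections of $q$ and the $\pi_1$-condition of Definition \ref{def2}.

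First, I would record the subgroup data of the covering. By construction there is a basepoint $\hat x_0\in q^{-1}((x_0,x_0))$ with $q_*\pi_1(\widehat{X\times X},\hat x_0)=\Delta$; as $\hat x_0$ ranges over the fiber $q^{-1}((x_0,x_0))$, the $\pi_1(X\times X,(x_0,x_0))$-action on the fiber conjugates this subgroup, so the collection $\{q_*\pi_1(\widehat{X\times X},\hat u):\hat u\in q^{-1}(u)\}$ is precisely the set of all conjugates of $\Delta$ in $\pi\times\pi$, for any $u\in X\times X$. Second, for each open $U\subset X\times X$ I would apply the covering-space lifting criterion componentwise. Since $U$ is open in the finite CW complex $X\times X$, it is an ANR, hence locally path connected and semi-locally simply connected (the same fact already used in the proof of Lemma \ref{lm3}). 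A continuous section of $q$ over $U$ is by definition a lift of the inclusion $U\hookrightarrow X\times X$ along $q$; on each connected component $V\subset U$ the lifting criterion says such a lift exists iff the image of $\pi_1(V,u)\to\pi_1(X\times X,u)$ is contained in $q_*\pi_1(\widehat{X\times X},\hat u)$ for some $\hat u\in q^{-1}(u)$, i.e.\ (by the first step) in some conjugate of $\Delta$. Assembling the lifts across components, $q$ admits a section over $U_i$ iff the condition of Definition \ref{def2} holds on $U_i$.

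Combining the two steps, an open cover $\{U_0,\ldots,U_k\}$ realises $\tc^\D(X)\le k$ iff $q$ admits a section over each $U_i$ iff it realises $\secat(q)\le k$; taking infima gives $\tc^\D(X)=\secat(q)$. The one point to be careful about is basepoint bookkeeping: specifically, that the ``every basepoint'' clause in Definition \ref{def2} corresponds precisely to treating each connected component of $U_i$ separately, and that the ``some conjugate of $\Delta$'' clause corresponds precisely to the freedom of choosing a lift $\hat u$ in the fiber over the basepoint. This is the standard covering-space dictionary and does not present a real obstacle.
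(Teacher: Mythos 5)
Your proposal is correct and follows essentially the same route as the paper: the paper's proof consists precisely of the remark that, for an open $U\subset X\times X$, the $\pi_1$-condition of Definition \ref{def2} is equivalent (via the covering-space lifting criterion) to $q$ admitting a section over $U$, and then comparing definitions. Your version simply spells out the component-by-component bookkeeping and the local niceness of open subsets of a finite CW complex, which the paper leaves implicit.
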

\begin{proof}
For an open subset $U\subset X\times X$, the condition that the induced map $\pi_1(U, u) \to \pi_1(X\times X, u)$ takes values in a subgroup 
conjugate to the diagonal $\Delta$ is equivalent to the condition that $q$ admits a continuous section over $U$. 
Using this remark the Lemma follows by comparing the definitions of $\tc^\D(X)$ and of Schwarz genus. 
\end{proof}

\begin{remark}{\rm 
If we remove the assumption that $X$ is aspherical then the topological complexity $\tc(X)$ is greater than or equal to the Schwarz genus of $q$, see
\cite{FTY}, Theorem 4.1. 
}
\end{remark}

Next we introduce terminology and notations which will be used in the statement of Theorem \ref{thm0}.

\subsection{} Recall that the join $X\ast Y$ of topological spaces $X$ and $Y$ can be defined as the quotient of the product $X\times [0,1]\times Y$ with respect to the equivalence relation $(x, 0, y)\sim (x, 0, y')$ and $(x, 1, y)\sim (x', 1, y)$ for all $x, x'\in X$ and $y, y'\in Y$. We have an obvious embedding 
$X\to X\ast Y$ given by $x\mapsto (x, 0, y)$ where $y\in Y$ is arbitrary. 

One may use the following notation. A point $(x, t, y)\in X\times [0,1]\times Y/\sim$ \, can be written as a formal linear combination $(1-t)x+ty$. This notation is clearly consistent with the identifications of the join. 

Let $\Delta^k$ denote the standard $k$-dimensional simplex, i.e. $$\Delta^k=\{(t_0, t_1, \dots, t_k); t_i\ge 0, \quad \sum_{i=0}^kt_i=1\}.$$  We may define 
the multiple join
$X_0\ast X_1\ast\dots\ast X_k$ of topological spaces $X_0, \dots, X_k$ as the quotient of the product $(\prod_{i=0}^k X_i)\times \Delta^k$ with respect to an equivalence relation $\sim$ described below. 
The points of the join are written as formal linear combinations 
$$x=t_0x_0+t_1x_1+\dots+t_kx_k, \quad x_i\in X_i, \quad (t_0, t_1, \dots, t_k)\in \Delta^k,$$
and we say that $x\sim x'$ where $x'=t'_0x'_0+t'_1x'_1+\dots+t'_kx'_k$ iff $t_i=t'_i$ for all $i=0, \dots, k$ and $x_i=x'_i$ provided $t_i\not=0$. 

\subsection{} \label{sec25} Let $\pi$ be a discrete group. We shall view $\pi$  as a discrete topological space with the following left $\pi\times\pi$-action: 
\begin{eqnarray}\label{action}(x, y)\cdot g=xgy^{-1}.\end{eqnarray}
This action is transitive and the isotropy subgroup of the unit element $1\in \pi$ coincides with the diagonal subgroup $\Delta\subset \pi\times \pi$. 
The isotropy subgroups of the other elements are the conjugates of $\Delta$. 

\subsection{} For an integer $k\ge 0$, let $E_k(\pi)$ denote the $(k+1)$-fold join 
$$E_k(\pi) = \pi\ast\pi\ast \dots\ast \pi.$$
We shall equip $E_k(\pi)$ with the left diagonal $\pi\times\pi$-action determined by the $\pi\times\pi$-action on $\pi$ as in \S \ref{sec25} above. 
Each $E_k(\pi)$ is naturally a $k$-dimensional equivariant simplicial complex with $k$-dimensional simplexes in 1-1 correspondence with sequences $(g_0, g_1, \dots, g_k)$ of group elements
$g_i\in \pi$. 
Note that $E_k(\pi)$ is $(k-1)$-connected and is in fact homotopy equivalent to a wedge of $k$-dimensional spheres. 

\subsection{} There is a natural equivariant embedding $$E_k(\pi) \hookrightarrow E_{k+1}(\pi)=E_k(\pi)\ast \pi.$$ Using it we may define the simplicial complex 
$$E(\pi)= \bigcup_{k=0}^\infty E_k(\pi) = \pi\ast \pi\ast \pi\ast \dots,$$
the join of infinitely many copies of $\pi$. 

\subsection{} Furthermore, let $E(\pi\times\pi)$ denote the classical classifying space for free $\pi\times\pi$ actions, i.e. 
$$E(\pi\times\pi)= (\pi\times\pi)\ast (\pi\times\pi)\ast \dots, $$
the join of infinitely many copies of $\pi\times\pi$. We shall view each copy of $\pi\times \pi$ as a discrete topological space with the left free action
of $\pi\times\pi$ given by $(x, y)\cdot(g, h)=(xg, yh)$ for $x, y, g, h\in \pi$. 
The space $E(\pi\times\pi)$ inherits the diagonal action of the group $\pi\times\pi$. 

\subsection{} The map $F: \pi\times\pi\to \pi$ given by $F(x, y)=xy^{-1}$ is $\pi\times\pi$-equivariant. The natural extension of $F$ to the infinite joins 
defines a $\pi\times\pi$-equivariant map
\begin{eqnarray}\label{F}
F: E(\pi\times\pi) \to E(\pi).
\end{eqnarray}

\begin{theorem}\label{thm0} Let $X$ be a finite aspherical cell complex and let $\pi=\pi_1(X, x_0)$ be its fundamental group. 
Then $\tc(X)$ coincides with the smallest integer $k$ such that there exists a $\pi\times\pi$-equivariant map $E(\pi\times\pi)\to E_k(\pi)$. 
%
\end{theorem}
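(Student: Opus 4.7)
The plan is to combine Lemma \ref{lm4} with A.\ Schwarz's classical characterisation of sectional category via iterated fibrewise joins, and then to pass from equivariant maps out of the universal cover to equivariant maps out of the standard model $E(\pi\times\pi)$ defined above.

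By Lemma \ref{lm3} and Lemma \ref{lm4}, $\tc(X)$ coincides with the Schwarz genus of the covering $q\colon \widehat{X\times X}\to X\times X$ corresponding to the diagonal subgroup $\Delta\subset \pi\times\pi$. Writing $\widetilde X$ for the universal cover of $X$, the product $\widetilde X\times\widetilde X$ is a principal $\pi\times\pi$-bundle over $X\times X$, and $q$ is isomorphic to the associated bundle $(\widetilde X\times\widetilde X)\times_{\pi\times\pi}\pi\to X\times X$ with fibre $\pi$ carrying the $\pi\times\pi$-action of \S \ref{sec25}. By Schwarz's theorem (\cite{Sv66}), $\secat(q)\le k$ if and only if the $(k+1)$-fold fibrewise join of $q$ with itself admits a continuous section. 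Since fibrewise join commutes with the associated-bundle construction, this iterated fibrewise join is precisely the associated bundle $(\widetilde X\times\widetilde X)\times_{\pi\times\pi}E_k(\pi)\to X\times X$, whose fibre $E_k(\pi)$ carries the diagonal $\pi\times\pi$-action defined above. Sections of a bundle $P\times_G F\to B$ associated to a principal $G$-bundle correspond bijectively to $G$-equivariant maps $P\to F$, and so $\secat(q)\le k$ if and only if there exists a $\pi\times\pi$-equivariant map $\widetilde X\times\widetilde X\to E_k(\pi)$.

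To finish, observe that since $X$ is aspherical, $\widetilde X\times\widetilde X$ is contractible with free $\pi\times\pi$-action and hence is itself a model for $E(\pi\times\pi)$. By the universal property of classifying spaces for free actions, the standard model $E(\pi\times\pi)$ defined above and the model $\widetilde X\times\widetilde X$ admit mutual $\pi\times\pi$-equivariant maps, unique up to equivariant homotopy; consequently the existence of a $\pi\times\pi$-equivariant map to $E_k(\pi)$ from one is equivalent to its existence from the other, completing the chain of equivalences. The main technical obstacle lies in the second step: one must check carefully that iterating the fibrewise join of the associated bundle $q$ reproduces the associated bundle with fibre $E_k(\pi)$ and the correct diagonal $\pi\times\pi$-action inherited from \eqref{action} on each join factor. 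Once this identification is in place, the rest is a routine unwinding of Schwarz's theorem combined with the universal property of $E(\pi\times\pi)$.
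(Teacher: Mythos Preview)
Your proof is correct and follows essentially the same approach as the paper: invoke Lemmas \ref{lm3} and \ref{lm4}, apply Schwarz's fibrewise-join characterisation of genus to the associated bundle $(\widetilde X\times\widetilde X)\times_{\pi\times\pi}\pi$, identify the iterated fibrewise join with the associated bundle with fibre $E_k(\pi)$, translate sections into $\pi\times\pi$-equivariant maps $\widetilde X\times\widetilde X\to E_k(\pi)$, and finally use that $\widetilde X\times\widetilde X$ is a model for $E(\pi\times\pi)$. The paper's proof differs only in presentation, citing Husemoller explicitly for the section/equivariant-map correspondence and the Milnor construction for the last step.
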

\begin{proof} Let $p: \tilde X\to X$ denote the universal cover of $X$. Here $\tilde X$ is an equivariant cell complex with free left $\pi$-action. 
The map $p\times p: \tilde X\times\tilde X\to X\times X$ is the universal cover of $X\times X$. We shall view $p\times p$ as a principal 
$G=\pi\times\pi$-bundle and for $k=0, 1, \dots$ construct the associated bundle 
\begin{eqnarray}\label{qk}
q_k: (\tilde X\times \tilde X)\times_{G} E_k(\pi)\to X\times X.
\end{eqnarray}
Here $(\tilde X\times \tilde X)\times_{G} E_k(\pi)$ denotes the quotient of the product $(\tilde X\times \tilde X)\times E_k(\pi)$ with respect to the 
following $G=\pi\times\pi$-action: $(g, h)\cdot (x, x', z) = (gx, hx', (g, h)\cdot z)$ where $g, h\in \pi$, $x, x'\in \tilde X$ and $z\in E_k(\pi)$. 

First we observe that the fibration $q_0$ coincides with the covering space $q: \widehat{X\times X}\to X\times X$ corresponding to the diagonal subgroup 
$\Delta\subset \pi\times\pi$ which appears in Lemma \ref{lm4}. Indeed, $E_0(\pi)=\pi$ has a transitive $G=\pi\times\pi$-action and the isotropy of the unit element $1\in \pi$ is the diagonal $\Delta\subset G=\pi\times\pi$. Hence we obtain a homeomorphism 
$$(\tilde X\times \tilde X\times E_0(\pi))/G \to (\tilde X\times \tilde X)/\Delta$$
commuting with the projections onto $X\times X$; thus we see that the fibration $q_0$ is isomorphic to the fibration
$p\times p: (\tilde X\times \tilde X)/\Delta \to X\times X$. It is obvious that the latter fibration 
is isomorphic to the connected covering $q$
corresponding to the diagonal subgroup $\Delta\subset \pi\times\pi$. 

Applying Lemma \ref{lm3} and Lemma \ref{lm4} we obtain that $\tc(X)$ coincides with the Schwarz genus of the fibration $q_0$. 

Next we apply a theorem of A. Schwarz (see \cite{Sv66}, Theorem 3) stating that genus of a fibration $p: E\to B$ equals the smallest integer $k$ such that 
the  
fiberwise join $p\ast p\ast \dots \ast p$ of $k+1$ copies of the fibration $p: E\to B$ admits a continuous section. The fiberwise join of $k+1$ copies of the 
fibration $q_0$ coincides with the fibration $q_k$. Thus we obtain that $\tc(X)$ coincides with the smallest $k$ such that $q_k$ has a continuous section. 

Finally we apply Theorem 8.1 from \cite{Hue}, chapter 4,  which states that continuous sections of the fibre bundle $q_k$ are in 1-1 correspondence with 
$G=\pi\times\pi$-equivariant maps 
\begin{eqnarray}\label{finally}\tilde X\times \tilde X \to E_k(\pi).\end{eqnarray}
Thus, $\tc(X)$ is the smallest $k$ such that a $G=\pi\times\pi$-equivariant map (\ref{finally}) exists. 
Finally we observe that the space $\tilde X\times \tilde X$ is $G=\pi\times\pi$-equivariantly homotopy equivalent to $E(\pi\times\pi)$ (in view of the Milnor construction) and the result follows. 
\end{proof}

\section{The second reduction}

In this section we prove the following statement which gives an intrinsic version of Theorem \ref{thm0}.

\begin{theorem}\label{thm00} Let $X$ be a finite aspherical cell complex and let $\pi=\pi_1(X, x_0)$ be its fundamental group. 
Then $\tc(X)$ coincides with the minimal dimension of a $\pi\times\pi$-CW complex $L$ such that 
the map $F$ (see (\ref{F})) can be factored as follows:
\begin{eqnarray}\label{FF}
E(\pi\times\pi) \to L \to E(\pi).
\end{eqnarray}
\end{theorem}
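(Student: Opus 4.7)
The plan is to prove that the minimal factorization dimension $d$ in the statement equals $\tc(X)$ by establishing two inequalities, using Theorem~\ref{thm0} together with equivariant obstruction theory. The key preliminary observation is that $E(\pi)=\pi\ast\pi\ast\cdots$ is the classifying space for the family $\D$ of subgroups of $\pi\times\pi$ subconjugate to the diagonal $\Delta$: for any subgroup $H\le\pi\times\pi$, the fixed-point set $E(\pi)^H$ is the infinite join of $\pi^H$, which is contractible when $H\in\D$ (equivalently $\pi^H\neq\emptyset$) and empty otherwise. Consequently $F\colon E(\pi\times\pi)\to E(\pi)$ is the unique $\pi\times\pi$-equivariant map up to equivariant homotopy, and the factorization of $F$ in the statement is to be understood up to equivariant homotopy.

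For the inequality $d\le\tc(X)$, let $k=\tc(X)$. By Theorem~\ref{thm0} there is a $\pi\times\pi$-equivariant map $\phi\colon E(\pi\times\pi)\to E_k(\pi)$. I would take $L=E_k(\pi)$, a $\pi\times\pi$-CW complex of dimension $k$, with $f=\phi$ and $g$ the canonical inclusion $E_k(\pi)\hookrightarrow E(\pi)$. The composite $g\circ f$ is equivariantly homotopic to $F$ by the uniqueness statement above, which gives $d\le k$.

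For the reverse inequality $\tc(X)\le d$, I would start from a factorization $E(\pi\times\pi)\xrightarrow{f}L\xrightarrow{g}E(\pi)$ with $\dim L=d$. The equivariance of $g$ first forces the isotropy of every cell of $L$ to lie in $\D$, since otherwise its image in $E(\pi)$ would have to sit in an empty fixed-point set. Then I would equivariantly deform $g$ into the subcomplex $E_d(\pi)\subset E(\pi)$ by standard equivariant obstruction theory. Proceeding by induction on the skeleta of $L$, the obstruction to extending a partial equivariant homotopy across an $n$-cell of isotropy $H\in\D$ (with $n\le d$) lies in the relative homotopy group $\pi_n(E(\pi)^H, E_d(\pi)^H)$. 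Here $E(\pi)^H$ is contractible, and $E_d(\pi)^H$ is the $(d+1)$-fold join of the non-empty discrete set $\pi^H$, hence $(d-1)$-connected. The long exact sequence of the pair therefore gives $\pi_n(E(\pi)^H, E_d(\pi)^H)=0$ for every $n\le d$, so all obstructions vanish. The resulting equivariant map $g'\colon L\to E_d(\pi)$ is equivariantly homotopic to $g$, and the composite $g'\circ f\colon E(\pi\times\pi)\to E_d(\pi)$ is equivariant; Theorem~\ref{thm0} then yields $\tc(X)\le d$.

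The main obstacle is the equivariant obstruction step in the second inequality: organising the cellular induction, checking that the connectivity of the fixed-point sets $E_d(\pi)^H$ is exactly what the dimension of $L$ requires, and making sure the deformation can be carried out in the equivariant category. The first inequality and the identification $E(\pi)\simeq E_\D(\pi\times\pi)$ are largely formal once the classifying-space framework is in place.
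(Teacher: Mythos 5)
Your proposal is correct and follows essentially the same route as the paper: identify $E(\pi)$ as the classifying $G$-CW-complex for the family $\D$ via the fixed-point sets $E(\pi)^H$, obtain the inequality $d\le\tc(X)$ from Theorem \ref{thm0} together with the uniqueness up to $G$-homotopy of equivariant maps $E(\pi\times\pi)\to E(\pi)$, and obtain the reverse inequality by compressing $g\colon L\to E(\pi)$ into $E_{\dim L}(\pi)$ using that $E_{\dim L}(\pi)^H$ is $(\dim L-1)$-connected while $E(\pi)^H$ is contractible (or empty). The only difference is one of packaging: where you run the cell-by-cell equivariant obstruction argument by hand, the paper feeds exactly the same connectivity input into the equivariant Whitehead theorem applied to the inclusion $E_k(\pi)\hookrightarrow E(\pi)$, concluding that $[L,E_k(\pi)]_G\to[L,E(\pi)]_G$ is surjective for $\dim L\le k$.
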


The proof of Theorem \ref{thm00} will follow a brief review of the basic material concerning classifying spaces and families of subgroups; we shall mainly follow \cite{Lue}. 

\subsection{} Let $G$ be a discrete group. {\it A $G$-CW-complex} is a CW-complex $X$ with a left $G$-action such that for each open cell
$e\subset X$ and each $g\in G$ with $ge\cap e\not=\emptyset$, the left multiplication by $g$ acts identically on $e$. 

A simplicial complex with a simplicial $G$-action is a $G$-CW-complex (with respect to the barycentric subdivision), see \cite{Lue}, Example 1.5.

{\it A family $\F$ of subgroups} of $G$ is a set of subgroups of $G$ which is closed under conjugation and finite intersections. 

\subsection{} \label{22}{\it A classifying $G$-CW-complex} $E_\F(G)$ with respect to a family $\F$ of $G$ is defined as a $G$-CW-complex $E_\F(G)$ such that 
\begin{itemize}\item[{(a)}] the isotropy subgroup of any element of $E_\F(G)$ belongs to $\F$; \item[{(b)}] For any $G$-CW-complex $Y$ all of whose
isotropy subgroups belong to $\F$ there is up to $G$-homotopy 
exactly one $G$-map $Y\to E_\F(G)$. 
\end{itemize}

A $G$-CW-complex $X$ is a model for $E_\F(G)$ if and only if all its isotropy subgroups belong to the family $\F$ and for each $H\in \F$ the set 
of $H$-fixed points $X^H$ is weakly contractible, i.e. $\pi_i(X^H, x_0)=0$ for any $i=0, 1, \dots$ and for any $x_0\in X^H$. See \cite{Lue}, Theorem 1.9. 

\subsection{} We shall use below the equivariant version of the Whitehead Theorem which we shall state as follows (see \cite{May}, Theorem 3.2 in Chapter 1). 

\begin{theorem}[Whitehead theorem]\label{thwhite} Let $f: Y\to Z$ be a $G$-map between $G$-CW-complexes such that for each subgroup $H\subset G$ the induced map 
$\pi_i(Y^H, x_0)\to \pi_i(Z^H, f(x_0))$ is an isomorphism for $i<k$ and an epimorphism for $i=k$ for any base point $x_0\in Y^H$. Then for any $G$-CW-complex $X$ the induced map 
on the set of $G$-homotopy classes 
$$f_\ast: [X, Y]_G \to [X, Z]_G $$
is an isomorphism if $\dim X<k$ and an epimorphism if $\dim X\le k$. 
\end{theorem}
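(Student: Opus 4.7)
The plan is to reduce the equivariant statement to a sequence of non-equivariant relative lifting problems on fixed-point sets, via the standard adjunction: for any subgroup $H\subset G$ and any $G$-space $W$, equivariant maps $G/H\times D^n\to W$ correspond bijectively to ordinary maps $D^n\to W^H$ (and similarly with $S^{n-1}$ in place of $D^n$). Combined with induction over the equivariant skeleta of $X$, this will convert the hypothesis ``$f^H\colon Y^H\to Z^H$ is $\pi_i$-iso for $i<k$ and $\pi_i$-epi for $i=k$'' into the vanishing of the obstructions to extension and to deformation.

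First I would reduce to the case that $f$ is the inclusion of a $G$-subcomplex. Replacing $Z$ by the equivariant mapping cylinder $M_f=Y\cup_f(Z\times I)$, one obtains a $G$-CW-complex containing $Y$ as a $G$-subcomplex together with a $G$-homotopy equivalence $M_f\to Z$; the connectivity hypothesis transfers, since $(M_f)^H=M_{f^H}$ and the long exact sequence of the pair $(M_{f^H},Y^H)$ then gives $\pi_i((M_f)^H,Y^H,x_0)=0$ for all $i\le k$ and all basepoints $x_0\in Y^H$.

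Next, for surjectivity when $\dim X\le k$, I would build a $G$-lift cell by cell. Given $\phi\colon X\to Z$, assume inductively that a $G$-map $\psi\colon X^{(n-1)}\to Y$ together with a $G$-homotopy $h\colon X^{(n-1)}\times I\to Z$ from $f\circ\psi$ to $\phi|_{X^{(n-1)}}$ has been constructed ($n\le k$). For each equivariant $n$-cell $G/H\times D^n$ with attaching map $\alpha\colon G/H\times S^{n-1}\to X^{(n-1)}$, the data already built, together with $\phi$ on the cell, determines an $H$-equivariant map on $(D^n\times\{0,1\})\cup(S^{n-1}\times I)$ into the pair $(Z^H,Y^H)$. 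Equivalently, one must extend a map $S^n\to Z^H$ (obtained by collapsing) to a map $D^{n+1}\to Z^H$ together with the constraint of landing in $Y^H$ on the original disc; the obstruction lies in $\pi_n(Z^H,Y^H,x_0)$, which vanishes by hypothesis for every choice of basepoint. Making all such extensions equivariantly (one representative per $G$-orbit of cells suffices, by equivariance) produces $\psi$ and $h$ on $X^{(n)}$. Pushing the induction through $\dim X\le k$ finishes the argument.

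For injectivity when $\dim X<k$, I would apply the same scheme to the pair $(X\times I,\,X\times\partial I)$. Given two $G$-maps $\psi_0,\psi_1\colon X\to Y$ and a $G$-homotopy $H\colon X\times I\to Z$ from $f\circ\psi_0$ to $f\circ\psi_1$, one already has an equivariant partial lift on $X\times\{0,1\}$; since $X\times I$ is a $G$-CW-complex of dimension $\le k$ and the relative cells missing from $X\times\partial I$ have dimension $\le k$, the same cell-by-cell extension produces a $G$-homotopy $X\times I\to Y$ between $\psi_0$ and $\psi_1$, together with a compatible $G$-homotopy of $G$-homotopies in $Z$ (which we do not need to retain).

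The main obstacle I expect is bookkeeping of basepoints and components: fixed-point sets $Y^H$ and $Z^H$ need not be connected, so before invoking $\pi_n(Z^H,Y^H,x_0)=0$ one has to verify that the image of the current partial lift lies in a single component and that the hypothesis ``for any base point $x_0\in Y^H$'' covers every component that arises. This is precisely why the theorem is stated with the all-basepoints clause; the $\pi_0$-epimorphism hypothesis takes care of reaching every component of $Z^H$ at the base of the induction ($n=0$), while the $\pi_0$-isomorphism (available when $k\ge 1$) ensures that components of $Y^H$ are not merged in $Z^H$ during the deformation step.
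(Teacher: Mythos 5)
The paper gives no proof of this theorem; it is cited directly from \cite{May} (Theorem~3.2, Chapter~1), where it is established via the equivariant HELP lemma, which is proved by exactly the skeletal induction you describe: using the adjunction $[G/H\times D^n,W]_G\cong[D^n,W^H]$ to reduce each cell to a non-equivariant lifting problem on fixed-point sets, after replacing $f$ by a $G$-cofibration via the equivariant mapping cylinder. Your argument is therefore essentially the standard proof and is correct, including the (often elided) point that the compression lemma must be invoked at every basepoint of $Y^H$ because the fixed-point sets need not be connected. One small slip in wording: at the inductive step of the surjectivity argument the data already in hand gives a map only on $(D^n\times\{1\})\cup(S^{n-1}\times I)$ into $Z^H$ with $S^{n-1}\times\{0\}$ landing in $Y^H$; the face $D^n\times\{0\}$ is what the vanishing of $\pi_n(Z^H,Y^H,x_0)$ produces, not part of the given data as the phrase ``$D^n\times\{0,1\}$'' suggests. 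This does not affect the identification of the obstruction group or the validity of the proof.
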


\subsection{Proof of Theorem \ref{thm00}}\label{sec24} First note that $G=\pi\times\pi$ acts freely on $E(\pi\times\pi)$ which is the classifying $G$-CW-complex for free $G$-actions (the Milnor construction). We refer to Example 1.5 from \cite{Lue} which implies that $E(\pi\times\pi)$ is a $G$-CW-complex. 

Next we examine the isotropy subgroups of $G=\pi\times\pi$ acting on $E_k(\pi)$ and $E(\pi)$. 
Recall that $G$ acts on $\pi$ according to formula (\ref{action}). The isotropy of an element $g\in \pi$ is 
the subgroup $\{(a, g^{-1}ag); a\in \pi\}\subset \pi\times\pi$ which is conjugate to the diagonal subgroup $\Delta$. 

It is easy to see that for a subgroup $H\subset G$ the fixed point set $\pi^H$ is non-empty iff $H$ is contained in a subgroup 
 conjugate to the diagonal $\Delta\subset G$.

For an element $x\in E_k(\pi)$, 
$$x= t_0x_0+t_1x_1+\dots+t_kx_k,$$
where $x_i\in \pi$, $t_i\in (0,1]$, $i=0, 1, \dots, k$, $t_0+t_1+\dots+t_k=1$, 
the isotropy subgroup is the intersection of the isotropy subgroups of the elements $x_i$. This intersection can be presented as follows.
Let $S$ denote the set $\{x_ix_j^{-1}\, ;\,  i, j =0, 1, \dots, k\}$. 
The symbol $Z(S)$ denotes the centraliser of $S$, i.e. the set of all $a\in \pi$ which commute with any element of $S$. 
Then the isotropy subgroup of $x$
equals 
\begin{eqnarray}\label{hbs}
H_{b, S}=\{(a, bab^{-1}); a\in Z(S)\}
\end{eqnarray}
where $b=x_i^{-1}$ for any $i=0, 1, \dots, k$. 


If $H\subset \pi\times\pi$ is a subgroup contained in a subgroup of type (\ref{hbs}), i.e. $H\subset H_{b, S}$, 
then the set $\pi^H$ is not empty and
$$E_k(\pi)^H = \pi^H \ast \pi^H\ast \dots \ast \pi^H, \quad\quad (k+1 \quad  \mbox{times}).$$
We see that the space $E_k(\pi)^H$ is nonempty and is $(k-1)$-connected. At the same time the space 
$E(\pi)^H=\pi^H\ast\pi^H\ast\cdots$ (the infinite join) is non-empty and contractible. We will use this property below in order to invoke the Whitehead theorem. 

We denote by $\D$ the family of subgroups of $\pi\times\pi$ containing the trivial subgroup and the groups $H_{b, S}$, for all $b\in \pi$ and 
all finite subsets $S\subset \pi$. 

The above discussion shows that $E(\pi)$ is the classifying $G$-CW-complex $E_\D(G)$ with respect to the family $\D$, see \S \ref{22}. 
In particular, we obtain that any two $G$-maps $X\to E(\pi)$ are $G$-homotopic provided all isotropy subgroups of $X$ are in $\D$. 

Let $k_1$ denote the minimal $k$ such that there exists an equivariant map $E(\pi\times\pi)\to E_k(\pi)$. 
We know that $k_1=\tc(X)$ by Theorem \ref{thm0}. 
Let $k_2$ be the smallest dimension of a 
$G$-CW complex $L$ admitting a factorisation (\ref{FF}). 
We have $k_2\le k_1$ since $\dim E_k(\pi)=k$ and any two equivariant maps $E(G)\to E(\pi)$ are equivariantly homotopic. 
On the other hand, suppose we have $$E(\pi\times\pi) \stackrel\alpha\to L \stackrel \beta\to E(\pi)$$ 
with $\dim L\le k$. We may apply the Whitehead Theorem \ref{thwhite} to the inclusion 
$E_k(\pi)\to E(\pi)$ concluding that for any $G$-CW-complex $L$ of dimension $\le k$ the map $$[L, E_k(\pi)]_G\to [L, E(\pi)]_G$$ is surjective. 
We then obtain a $G$-map
$g: L\to E_k(\pi)$ and its composition $g\circ \alpha: E(\pi\times\pi)\to E_k(\pi)$; clearly the composition 
$E(\pi\times\pi)\stackrel{g\circ \alpha}\to E_k(\pi) \hookrightarrow E(\pi)$ is $G$-homotopic to $\beta\circ\alpha$. 
This shows that $k_1\le k_2$ and hence $k_1=k_2$ proving Theorem \ref{thm00}. \qed

We can restate Theorem \ref{thm00} as follows:

\begin{theorem}\label{thm000} Let $X$ be a finite aspherical cell complex and let $\pi=\pi_1(X, x_0)$ be its fundamental group. 
Let $G$ denote the group $\pi\times\pi$. Then $\tc(X)$ coincides with the minimal integer $k$ such that the canonical map 
\begin{eqnarray}\label{eq}
E(G) \to E_\D(G)
\end{eqnarray}
is $G$-equivariantly homotopic to a map with values in the $k$-dimensional skeleton $E_\D(G)^{(k)}$. 
\end{theorem}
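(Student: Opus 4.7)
The plan is to deduce Theorem \ref{thm000} directly from Theorem \ref{thm00} by identifying $E_\D(G)$ with $E(\pi)$ and observing that under its natural simplicial structure the $k$-dimensional skeleton of $E(\pi)$ is precisely $E_k(\pi)$. The statement is then essentially a repackaging of the previous one, with the only content lying in verifying the two directions of inequality.

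First I would invoke the analysis in \S \ref{sec24}: the space $E(\pi)$ is a model for the classifying $G$-CW-complex $E_\D(G)$, and by the defining universal property any two $G$-maps $E(G) \to E_\D(G)$ are $G$-homotopic, so the canonical map is $G$-homotopic to the map $F$ of (\ref{F}). Moreover $E_\D(G)^{(k)} = E_k(\pi)$, since $E_k(\pi)$ is a $k$-dimensional subcomplex of $E(\pi)$ and the successive inclusions $E_k(\pi) \hookrightarrow E_{k+1}(\pi)$ attach only $(k+1)$-dimensional simplices. Thus Theorem \ref{thm000} amounts to the assertion that $\tc(X)$ equals the smallest $k$ such that $F$ is $G$-homotopic to a map into $E_k(\pi)$.

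Denote this integer by $k_3$ and set $k_2=\tc(X)$. The inequality $k_2 \le k_3$ is immediate: if $F$ is $G$-homotopic to $F': E(\pi\times\pi) \to E_k(\pi)$, then $F'$ together with the inclusion $E_k(\pi) \hookrightarrow E(\pi)$ is a factorisation of $F$ through a $G$-CW-complex of dimension $\le k$, whence Theorem \ref{thm00} gives $\tc(X)\le k$. For the reverse inequality $k_3 \le k_2$, suppose $F$ admits a $G$-equivariant factorisation $E(\pi\times\pi) \stackrel{\alpha}{\to} L \stackrel{\beta}{\to} E(\pi)$ with $\dim L \le k$. Applying the equivariant Whitehead Theorem \ref{thwhite} to the inclusion $E_k(\pi) \hookrightarrow E(\pi)$ --- which on every $H$-fixed-point set with $H\in\D$ is the inclusion of the $(k{+}1)$-fold join $\pi^H \ast \cdots \ast \pi^H$ into the infinite join, hence induces an isomorphism on $\pi_i$ for $i<k$ and a surjection for $i=k$ --- produces a $G$-map $g: L \to E_k(\pi)$ whose postcomposition with the inclusion is $G$-homotopic to $\beta$. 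Then $g \circ \alpha$ deforms $F$ into $E_k(\pi)$, establishing $k_3 \le k$.

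The main obstacle is conceptual rather than technical: one must recognise that the two earlier reductions (Theorems \ref{thm0} and \ref{thm00}) have already done all the work, and that Theorem \ref{thm000} merely replaces the phrase \emph{factors through a $G$-CW-complex of dimension $\le k$} by \emph{deforms equivariantly into the $k$-skeleton of $E_\D(G)$}. The equivariant Whitehead argument needed for the nontrivial direction is essentially a verbatim repetition of the one used to prove $k_1 \le k_2$ in \S \ref{sec24}.
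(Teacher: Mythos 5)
Your overall strategy is sound, and your treatment of the hard direction (equivariant Whitehead applied to $E_k(\pi)\hookrightarrow E(\pi)$ after checking the fixed-point connectivity for $H\in\D$) is essentially a repetition of the argument of \S \ref{sec24}, which is fine. However, there is a genuine error in the step on which you hang the whole reduction: the claim that $E_\D(G)^{(k)} = E_k(\pi)$, justified by saying that $E_k(\pi)\hookrightarrow E_{k+1}(\pi)$ \lq\lq attaches only $(k+1)$-dimensional simplices\rq\rq. This is false: $E_{k+1}(\pi)=E_k(\pi)\ast\pi$ acquires new simplices in every dimension $0,1,\dots,k+1$ (every vertex of the new copy of $\pi$ is a new $0$-cell, every segment joining an old vertex to a new one is a new $1$-cell, etc.). Consequently the $k$-skeleton of $E(\pi)=\pi\ast\pi\ast\cdots$ consists of all simplices spanned by vertices lying in \emph{any} $k+1$ of the infinitely many copies of $\pi$, and it strictly contains $E_k(\pi)$; already $E(\pi)^{(0)}$ is the disjoint union of all the copies of $\pi$, whereas $E_0(\pi)$ is just the first copy. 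So the integer you call $k_3$ (smallest $k$ with $F$ deformable into $E_k(\pi)$) is not, on the face of it, the integer in the statement of Theorem \ref{thm000}, and your proof as written establishes the $E_k(\pi)$-version only.

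The gap is repairable with little effort, but it must be closed. For one direction you only need the (true) containment $E_k(\pi)\subseteq E_\D(G)^{(k)}$: a deformation of the canonical map into $E_k(\pi)$ is in particular a deformation into the $k$-skeleton. For the other direction you should argue with the skeleton itself rather than with $E_k(\pi)$: if the canonical map is $G$-homotopic to a map into $E_\D(G)^{(k)}$, take $L=E_\D(G)^{(k)}$, a $G$-CW-complex of dimension $k$, and apply Theorem \ref{thm00} to get $\tc(X)\le k$; conversely, given a factorisation $E(G)\to L\to E_\D(G)$ with $\dim L\le k$, the paper simply deforms the map $L\to E_\D(G)$ into the $k$-skeleton by the $G$-cellular approximation theorem (this also has the advantage of working for an arbitrary $G$-CW model of $E_\D(G)$, not just the join model $E(\pi)$). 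Your Whitehead-based deformation into $E_k(\pi)$ can replace that cellular-approximation step for the model $E(\pi)$, but only after you drop the false identification of $E_k(\pi)$ with the $k$-skeleton and add the trivial containment argument above; note also that your $E_k(\pi)$-statement is really just Theorem \ref{thm0} combined with the uniqueness up to $G$-homotopy of $G$-maps into $E_\D(G)$.
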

\begin{proof}
If the map (\ref{eq}) is $G$-homotopic to a map with values in $E_\D(G)^{(k)}$ then we can take $L=E_\D(G)^{(k)}$ to obtain a factorisation of Theorem \ref{thm00}. Conversely, given a factorisation of Theorem \ref{thm00}, the map $L\to E_\D(G)$ can be deformed into $E_\D(G)^{(k)}$ using the $G$-cellular 
approximation theorem. 
\end{proof}

Let us recall that the Lusternik - Schnirelmann category of an aspherical space can be characterised in a similar way:

\begin{proposition}\label{propeg}
Let $X$ be a finite aspherical cell complex and let $\pi=\pi_1(X, x_0)$ be its fundamental group. 
Then the Lusternik - Schnirelmann category $\cat(X)$ coincides with the minimal dimension of a $\pi$-CW complex $L$ such that 
the identity map $E(\pi)\to E(\pi)$ can be $\pi$-equivariantly factored as follows
\begin{eqnarray}\label{categ}
E(\pi) \to L \to E(\pi).
\end{eqnarray}
\end{proposition}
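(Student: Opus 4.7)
The plan is to mimic the two-step reduction carried out in the proofs of Theorem \ref{thm0} and Theorem \ref{thm000}, now performed for the based path fibration of $X$ (equivalently, for the universal cover $\tilde X\to X$) in place of the fibration associated to $X\times X$.

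First I would establish the ``category analogue'' of Theorem \ref{thm0}: for $X=K(\pi,1)$ finite, $\cat(X)$ equals the smallest $k$ for which a $\pi$-equivariant map $E(\pi)\to E_k(\pi)$ exists. This rests on the classical fact that $\cat(X)=\secat(p_0)$, where $p_0:P_0X\to X$ is the based path fibration. For aspherical $X$, $p_0$ is fibre-homotopy equivalent to the principal $\pi$-bundle $q':\tilde X\to X$. Applying Schwarz's fibrewise-join theorem (exactly as in the proof of Theorem \ref{thm0}), $\secat(q')$ equals the smallest $k$ such that the associated bundle $\tilde X\times_\pi E_k(\pi)\to X$ admits a continuous section; by the result of \cite{Hue} cited there, such sections are in one-to-one correspondence with $\pi$-equivariant maps $\tilde X\to E_k(\pi)$. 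Since $\tilde X$ is a free contractible $\pi$-CW complex, it is $\pi$-equivariantly homotopy equivalent to $E(\pi)$, so the condition becomes the existence of a $\pi$-equivariant map $E(\pi)\to E_k(\pi)$.

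Next I would carry out the analogue of the argument for Theorem \ref{thm000}, now for the (much smaller) family $\F=\{1\}$ of subgroups of $\pi$. Write $k_1=\cat(X)$ and let $k_2$ denote the minimal dimension of a $\pi$-CW complex $L$ fitting into a factorisation (\ref{categ}). Choosing $L=E_{k_1}(\pi)$, which is $k_1$-dimensional and maps canonically into $E(\pi)$, gives $k_2\le k_1$. For the reverse inequality I would apply the equivariant Whitehead theorem (Theorem \ref{thwhite}) to the inclusion $E_{k_2}(\pi)\hookrightarrow E(\pi)$. Because both spaces carry free $\pi$-actions, the hypothesis need only be verified for the trivial subgroup; as $E_{k_2}(\pi)$ is $(k_2-1)$-connected and $E(\pi)$ is contractible, the inclusion induces isomorphisms on $\pi_i$ for $i<k_2$ and trivially an epimorphism on $\pi_{k_2}$. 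Hence for any $\pi$-CW complex $L$ of dimension $\le k_2$ (noting that the equivariant map $\beta:L\to E(\pi)$ automatically forces the action on $L$ to be free), the induced map $[L,E_{k_2}(\pi)]_\pi\to[L,E(\pi)]_\pi$ is surjective. Lifting $\beta$ to a $\pi$-equivariant map $g:L\to E_{k_2}(\pi)$, the composition $g\circ\alpha:E(\pi)\to E_{k_2}(\pi)$ witnesses $k_1\le k_2$.

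The only genuinely non-formal ingredient is the first step, namely identifying $\cat(X)$ with the Schwarz genus of the universal cover in the aspherical case; this is essentially the classical Eilenberg--Ganea theorem recast in equivariant language, and once it is in hand the remainder is a direct equivariant-Whitehead manipulation strictly parallel to the passage from Theorem \ref{thm0} to Theorem \ref{thm000}.
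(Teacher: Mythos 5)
Your proposal is correct and follows essentially the same route as the paper, which itself only sketches the argument: identify $\cat(X)$ with the Schwarz genus of $\tilde X\to X$, apply Schwarz's fibrewise-join theorem and the correspondence between sections of $\tilde X\times_\pi E_k(\pi)\to X$ and $\pi$-equivariant maps $\tilde X\to E_k(\pi)$, then conclude with the equivariant Whitehead theorem exactly as in the passage from Theorem \ref{thm0} to Theorem \ref{thm000}. The only (harmless) deviation is in the first step, where you obtain $\cat(X)=\secat(\tilde X\to X)$ via the based path fibration and a fibre-homotopy equivalence rather than by the open-cover/$\pi_1$ argument the paper suggests via Lemmas \ref{lm3} and \ref{lm4}; you also correctly note the points the paper leaves implicit (freeness of the action on $L$ forced by the map to $E(\pi)$, and that any composite $E(\pi)\to L\to E(\pi)$ is automatically $\pi$-homotopic to the identity).
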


This statement is essentially contained in \cite{EG}, compare \cite[Proposition 1]{ EG} where, however there is an assumption $n\ge 2$. 
The proof of Proposition \ref{propeg} in the general case can be obtained similarly to the proof of Theorem \ref{thm000} and we shall briefly indicate the main steps. Firstly, one states that $\cat(X)$ equals the Schwarz genus of the universal covering $\tilde X\to X$, compare Lemma \ref{lm3} and Lemma \ref{lm4}.  
Secondly, using the theorem of Schwarz about joins we obtain that $\cat(X)$ equals the smallest $k$ such that the fibration 
$\tilde X\times_\pi E_k(\pi)\to X$ admits a continuous section, compare the proof of Theorem \ref{thm0}. 
Here we view the complex $E_k(\pi)$  with the left $\pi$-action which is free. Thirdly, we find that $\cat(X)$ equals the smallest $k$ such that there exists a $\pi$-equivariant map $\tilde X\to E_k(\pi)$, compare Theorem \ref{thm0}. And finally, one uses the universal properties of the classifying space 
$E(\pi)=\tilde X$ and the equivariant Whitehead theorem to restate the result in the form of Proposition \ref{propeg}.

\subsection{} Let $\od$ denote the orbit category with respect to the family $\D$, see \cite{Bre}; we shall recall these notions in the following section.
Let $\cd_\D(\pi\times\pi)$ denote the cohomological dimension of the constant $\od$-module $\uz$. 
Since $E(\pi)$ is a model for the classifying space $E_\D(G)$, applying Theorem 5.2 from \cite{Lue} we obtain that $E(\pi)$ has 
the equivariant homotopy type of a $G$-CW-complex of dimension $\le \max\{3, \cd_\D(\pi\times\pi)\}$. Together with Theorem \ref{thm00} this gives the following Corollary. 

\begin{corollary}\label{upper}
Let $X$ be a finite aspherical cell complex and let $\pi=\pi_1(X, x_0)$ be its fundamental group. 
Then 
\begin{eqnarray}
\tc(X)\le \max\{3, \cd_\D(\pi\times\pi)\}.\end{eqnarray}
\end{corollary}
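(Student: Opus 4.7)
The plan is to combine Theorem \ref{thm00} (or its restatement, Theorem \ref{thm000}) with a general construction, due to Lück, that produces a low-dimensional $G$-CW-model for the classifying space $E_\D(G)$. By Theorem \ref{thm00}, to bound $\tc(X)$ from above by an integer $d$ it suffices to exhibit any $G$-CW-complex $L$ of dimension at most $d$ through which the canonical map $F\colon E(\pi\times\pi)\to E(\pi)$ factors up to $G$-homotopy. So the task reduces to finding such a small model $L$.

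First I would recall, from the proof of Theorem \ref{thm00} carried out in \S \ref{sec24}, that $E(\pi)$ is itself a model for the classifying space $E_\D(G)$ with $G=\pi\times\pi$. In particular, the canonical map $F$ coincides, up to $G$-homotopy, with the universal map $E(G)\to E_\D(G)$ arising from the definition of a classifying space for the family $\D$. Next I would invoke Theorem~5.2 of \cite{Lue}, which asserts that for any discrete group $G$ and any family $\F$ of subgroups, $E_\F(G)$ admits a $G$-CW-model $L$ of dimension $\le \max\{3,\cd_\F(G)\}$. Applying this to $\F=\D$ produces a $G$-CW-complex $L$ of dimension at most $d=\max\{3,\cd_\D(\pi\times\pi)\}$ which is again a model for $E_\D(G)$.

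Then I would use the universal property of $E_\D(G)$ from \S \ref{22}: any two $G$-CW-models for $E_\D(G)$ are $G$-homotopy equivalent, and any $G$-CW-complex whose isotropy is contained in $\D$ admits, uniquely up to $G$-homotopy, a $G$-map into $L$. In particular $E(\pi)$ and $L$ are related by mutually inverse $G$-homotopy equivalences $h\colon E(\pi)\to L$ and $h'\colon L\to E(\pi)$, and the composite
\[
E(\pi\times\pi)\xrightarrow{h\circ F} L \xrightarrow{h'} E(\pi)
\]
is $G$-homotopic to $F$. This exhibits precisely the kind of factorisation demanded by Theorem \ref{thm00}, with $\dim L\le d$. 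Hence $\tc(X)\le d=\max\{3,\cd_\D(\pi\times\pi)\}$.

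I do not expect any step to be a serious obstacle here, since the two main ingredients — Theorem \ref{thm00} and Lück's dimension theorem — have already been established or cited. The only subtlety worth flagging is the role of the constant $3$ in the bound: it arises from the Eilenberg--Ganea-type phenomenon underlying Lück's theorem, where the passage from a projective resolution of the trivial $\mathcal{O}_\D$-module $\uz$ of length $\cd_\D(G)$ to a geometric $G$-CW-model can fail to be realizable geometrically in the very lowest dimensions. This is the reason Corollary \ref{upper} cannot simply read $\tc(X)\le \cd_\D(\pi\times\pi)$, and it parallels the analogous issue in the classical Eilenberg--Ganea theorem for $\cat(X)$ recorded in Proposition \ref{propeg}.
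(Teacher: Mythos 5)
Your proposal is correct and follows essentially the same route as the paper: it combines Theorem \ref{thm00} with the identification of $E(\pi)$ as a model for $E_\D(G)$ and Lück's Theorem 5.2 to obtain a $G$-CW-model of dimension at most $\max\{3,\cd_\D(\pi\times\pi)\}$ through which the canonical map factors up to $G$-homotopy. The extra detail you supply about comparing the two models of $E_\D(G)$ via mutually inverse $G$-homotopy equivalences is exactly the implicit step in the paper's argument.
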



\begin{proposition}\label{Shapiro}
For any discrete group $\pi$ we have $\mathrm{cd}(\pi)\leq \mathrm{cd}_\mathcal{D}(\pi\times \pi)$.
\end{proposition}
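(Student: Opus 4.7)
The name of the proposition already gives away the strategy: use a version of Shapiro's lemma to transfer a top-degree cohomology class of $\pi$ to a non-vanishing Bredon cohomology class of $\pi\times\pi$ with respect to $\D$. The first observation is that the diagonal subgroup $\Delta=\{(a,a):a\in\pi\}\subset \pi\times\pi$ already lies in $\D$: choosing $b=1$ and $S=\{1\}\subset \pi$ in the parametrisation (\ref{hbs}) gives $Z(S)=\pi$ and $H_{1,\{1\}}=\Delta$. Moreover $\Delta$ is isomorphic to $\pi$ as an abstract group, so the inequality $\cd(\pi)\le\cd_\D(\pi\times\pi)$ would reduce to showing that, for a member $H$ of $\D$, the ordinary cohomological dimension of $H$ is bounded above by $\cd_\D(\pi\times\pi)$.

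\textbf{Shapiro isomorphism.} The central tool would be the Shapiro-type isomorphism
$$H^n_\D(\pi\times\pi;\,\underline{M})\;\cong\;H^n(H;M),\qquad n\ge 0,$$
valid for any subgroup $H\in\D$ and any $\Z H$-module $M$, where $\underline{M}$ is the $\od$-module obtained from $M$ by the right adjoint to evaluation at the orbit $G/H$, with $G=\pi\times\pi$. Concretely, one may take $\underline{M}(G/K)=\Hom_{\Z H}\!\big(\Z[\mathrm{Map}_G(G/K,G/H)],M\big)$. The key technical point is that evaluation at $G/H$ sends free Bredon modules to $\Z H$-free modules (because $\mathrm{Map}_G(G/K,G/H)$ is a free $H$-set for every $K\in\D$), so that a projective resolution of the constant Bredon module $\uz$ over $\od$ evaluates at $G/H$ to a $\Z H$-projective resolution of $\Z$; combining this with the above adjunction yields the displayed isomorphism.

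\textbf{Conclusion and main obstacle.} Granted Step 2, pick a $\Z\pi$-module $M$ realising the cohomological dimension of $\pi$, so that $H^{\cd(\pi)}(\pi;M)\ne 0$. Viewing $M$ as a $\Z\Delta$-module via the identification $\Delta\cong\pi$ and applying the Shapiro isomorphism with $H=\Delta$ produces a non-zero class in $H^{\cd(\pi)}_\D(\pi\times\pi;\underline{M})$, which forces $\cd_\D(\pi\times\pi)\ge\cd(\pi)$. The only non-formal point is the Shapiro isomorphism itself; its precise statement and verification rely on the machinery of the orbit category $\od$ and Bredon (co)homology, which is set up properly in the section that follows, so I would expect the authors either to cite it from Lück's monograph on transformation groups or to verify it on the spot once the orbit category is in place. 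A purely topological substitute, obtained by noting that $\pi\times\{1\}$ acts freely on any Bredon CW-model of $E_\D(\pi\times\pi)$ (since $H_{b,S}\cap(\pi\times\{1\})=\{1\}$ for every $b$ and $S$), yields only the weaker bound $\cd(\pi)\le\max\{3,\cd_\D(\pi\times\pi)\}$ in view of L\"uck's dimension theorem and the Eilenberg--Ganea phenomenon, which is why an algebraic Shapiro-style argument is preferable for the sharp statement asserted.
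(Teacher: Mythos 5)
There is a genuine gap, and it lies exactly in the step you flag as "the only non-formal point": the Shapiro-type isomorphism $H^n_\D(\pi\times\pi;\underline{M})\cong H^n(H;M)$ for a subgroup $H\in\D$ is false, and your choice of subgroup (the diagonal $\Delta$) is precisely the wrong one. Any correct Shapiro statement for a subgroup $L\le G=\pi\times\pi$ relative to the family $\D$ computes the Bredon cohomology of $L$ with respect to the induced family $\{L\cap K^g\colon K\in\D\}$; for $L=\Delta$ this family contains $\Delta$ itself (because $\Delta\in\D$), so that cohomology vanishes in positive degrees and says nothing about $\cd(\pi)$. Your key technical claim also fails concretely: $\mathrm{Map}_G(G/K,G/H)$ is not a free $H$-set for $K\in\D$ when $H=\Delta$. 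Already for $K=H=\Delta$ one has $\mathrm{Map}_G(G/\Delta,G/\Delta)\cong N_G(\Delta)/\Delta\cong Z(\pi)$, a single point when $\pi$ is centreless, and the group acting naturally on it is $N_G(\Delta)/\Delta$, not $\Delta$; similarly, restricting the principal component $\Z[G/\Delta]$ of the free $\od$-module $\um_{G/\Delta}$ to $\Delta$ gives a permutation module containing the trivial module as a direct summand (the coset $\Delta$ is a fixed point), so projective Bredon resolutions of $\uz$ do not restrict to projective $\Z\Delta$-resolutions of $\Z$, and the adjunction argument you sketch breaks down. Membership of $\Delta$ in $\D$ is exactly what makes it invisible to $\D$-Bredon cohomology.

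The subgroup that works is the one you mention only to dismiss: $\pi\times 1$, which intersects every $H_{b,S}$, hence every member of $\D$, trivially. The paper argues in two steps, both built on $\pi\times 1$. First, your "purely topological substitute" is its opening argument: if $k=\cd_\D(G)\ge 3$, a $k$-dimensional model of $E_\D(G)$ is a free contractible $(\pi\times 1)$-CW-complex, hence a model for $E(\pi)$, so $\cd(\pi)\le k$. Second, the sharp statement without the $\max\{3,\cdot\}$ caveat comes from the algebraic Shapiro lemma in Bredon cohomology (Fluch, Proposition 3.31) applied to $\pi\times 1$, not to $\Delta$: co-induction along the inclusion $\mathcal{O}_{\{1\}}(\pi\times 1)\to\od$ gives $H^*(\pi;M)\cong H^*_\D(G;\mathrm{coind}(M))$, and here the mechanism you wanted does work, because for every $K\in\D$ the $(\pi\times 1)$-set $G/K$ is free, so free $\od$-modules restrict to free $\Z[\pi]$-modules. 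Thus the overall architecture of your plan (a Shapiro isomorphism feeding a nonvanishing top-degree class) matches the paper, but to repair the proof you must replace $\Delta$ by $\pi\times 1$ throughout.
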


\begin{proof}
Recall that $G$ denotes $\pi\times\pi$. Assume first that $k:=\mathrm{cd}_\mathcal{D}(G)\ge 3$, so that there exists a $k$-dimensional model for $E_\mathcal{D}(G)$. Since the trivial subgroup is in $\mathcal{D}$, the space $E_\mathcal{D}(G)$ is contractible. Restricting the $G$-action to the subgroup $\pi\times 1\subseteq G$ gives a free $\pi$-action, since $E_\mathcal{D}(G)$ has isotropy in $\mathcal{D}$ and $(\pi\times 1)\cap H$ is trivial for all $H\in \mathcal{D}$. Hence $E_\mathcal{D}(G)$ is a $k$-dimensional model for $E(\pi)$, and it follows that $\mathrm{cd}(\pi)\leq k$.

The general algebraic result of Proposition \ref{Shapiro} follows from Shapiro's lemma in Bredon cohomology \cite[Proposition 3.31]{Fl}, which gives isomorphisms
\[
H^*(\pi;M)\cong H^*_\mathcal{D}(G;\operatorname{coind}_I(M))
\]
for each $\pi$-module $M$. Here the co-induction is along the inclusion functor $I:\mathcal{O}_{\{1\}}(\pi\times 1)\to \mathcal{O}_\mathcal{D}(G)$. This argument does not require the assumption $k\ge 3$. 
\end{proof}

\begin{corollary} Suppose that $\pi = \Bbb Z^k$. Then 
$\mathrm{cd}_\mathcal{D}(\pi\times \pi) = \mathrm{cd}(\pi)=k.$
\end{corollary}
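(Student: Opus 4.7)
The plan is to establish the non-trivial inequality $\mathrm{cd}_\mathcal{D}(\pi\times\pi)\le k$ by exhibiting an explicit $k$-dimensional model for the classifying space $E_\mathcal{D}(\pi\times\pi)$; the matching lower bound $\mathrm{cd}(\pi)\le \mathrm{cd}_\mathcal{D}(\pi\times\pi)$ is already given by Proposition \ref{Shapiro}, and $\mathrm{cd}(\mathbb{Z}^k)=k$ is classical.

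The first step is to simplify the family $\mathcal{D}$ under the hypothesis that $\pi$ is abelian. Since every element of $\pi$ is central, the centraliser $Z(S)$ equals $\pi$ for every finite subset $S\subset\pi$, and $bab^{-1}=a$ for all $a,b\in\pi$. Plugging this into the formula (\ref{hbs}) gives $H_{b,S}=\{(a,a)\colon a\in \pi\}=\Delta$ for every choice of $b$ and $S$. Therefore $\mathcal{D}=\{\{1\},\Delta\}$, a family with only two members (and it is already closed under intersections and conjugations).

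The second step is to build a $k$-dimensional geometric model for $E_\mathcal{D}(\pi\times\pi)$. Consider the $(\pi\times\pi)$-action on $\mathbb{R}^k$ defined by $(a,b)\cdot x=x+a-b$, where we identify $\pi=\mathbb{Z}^k\subset\mathbb{R}^k$. This action factors through the surjection $\pi\times\pi\to\pi$, $(a,b)\mapsto a-b$, whose kernel is $\Delta$; consequently every point of $\mathbb{R}^k$ has isotropy exactly $\Delta\in\mathcal{D}$. Equip $\mathbb{R}^k$ with its standard equivariant CW-structure (the universal cover of the $k$-torus), so that it becomes a $k$-dimensional $(\pi\times\pi)$-CW complex. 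The fixed point sets to check are $(\mathbb{R}^k)^{\{1\}}=\mathbb{R}^k$ and $(\mathbb{R}^k)^{\Delta}=\mathbb{R}^k$, both of which are contractible; by the characterisation recalled in \S \ref{22}, the space $\mathbb{R}^k$ is a model for $E_\mathcal{D}(\pi\times\pi)$. Hence $\mathrm{cd}_\mathcal{D}(\pi\times\pi)\le k$.

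Finally, combining this upper bound with the inequality $\mathrm{cd}(\pi)=k\le\mathrm{cd}_\mathcal{D}(\pi\times\pi)$ from Proposition \ref{Shapiro} yields the stated equality. There is no real obstacle here: the only point one has to be mildly careful about is that $\mathcal{D}$ is \emph{a priori} the whole family listed in \S \ref{sec24} rather than just $\{\{1\},\Delta\}$, and one must verify that abelianness collapses it to the latter before the geometric model $\mathbb{R}^k$ qualifies as a classifying space for $\mathcal{D}$.
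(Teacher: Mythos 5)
Your proposal is correct and follows essentially the same route as the paper: the same $G$-action $((a,b),x)\mapsto a-b+x$ on $\mathbb{R}^k$ as a $k$-dimensional model for $E_\mathcal{D}(\pi\times\pi)$, combined with Proposition \ref{Shapiro} for the reverse inequality. Your explicit check that abelianness collapses $\mathcal{D}$ to $\{\{1\},\Delta\}$ just spells out what the paper leaves as ``easily seen''.
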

\begin{proof} The space $\Bbb R^k$ is a free, contractible $\pi$-CW-complex where the action is given by $(a, x) \mapsto a+x$ for $a\in \Bbb Z^k$ and $x\in \Bbb R^k$. We may promote this $\pi$-action to a $G$-action on $\Bbb R^k$, by setting $((a,b), x)\mapsto a-b+x$ (here we use the assumption that $\pi$ is abelian). It is easily seen that $\Bbb R^k$ with this $G$-action becomes a model for 
$E_\mathcal{D}(G)$.
The inequality $\mathrm{cd}_\mathcal{D}(\pi\times \pi) \leq \mathrm{cd}(\pi)=k$ is now immediate and the inverse inequality $\mathrm{cd}(\pi)\leq \mathrm{cd}_\mathcal{D}(\pi\times \pi)$ is Proposition \ref{Shapiro}.
\end{proof}

\begin{corollary}\label{AtimesB}
Let $X$ be a finite aspherical complex with fundamental group $\pi=\pi_1(X,x_0)$, and let $K\leq G=\pi\times\pi$ be a subgroup such that $K\cap H = \{1\}$ for all $H\in \mathcal{D}$. Then $\mathrm{cd}(K)\leq\tc(X)$.
\end{corollary}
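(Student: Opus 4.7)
The plan is to invoke Theorem \ref{thm000} to replace the target $E_\D(G)$ by its $\tc(X)$-dimensional skeleton $L$, and then exhibit a classifying space for $K$ as a homotopy retract of the finite-dimensional quotient $L/K$.

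First, I would apply Theorem \ref{thm000} to obtain $L := E_\D(G)^{(\tc(X))}$, a $G$-CW complex of dimension $\tc(X)$ whose isotropy subgroups all lie in $\D$, together with a $G$-equivariant map $\alpha \colon E(G) \to L$. Restricting actions to the subgroup $K \leq G$, the hypothesis $K \cap H = \{1\}$ for every $H \in \D$ ensures that each $G$-cell of the form $(G/H) \times D^n$ decomposes, as a $K$-space, into a disjoint union of free $K$-cells of the same dimension. Thus $L$ becomes a free $K$-CW complex of dimension $\tc(X)$. Moreover, $E(G)$ is free as a $K$-space (being free for the whole of $G$) and is contractible, so it serves as a model for the universal free $K$-space.

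Next, I would invoke the universal property of classifying spaces for free actions: since $L$ is a free $K$-CW complex and $E(G)$ is free and contractible, there exists a $K$-equivariant map $\gamma \colon L \to E(G)$, unique up to $K$-homotopy. The composition $\gamma \circ \alpha$ and the identity are two $K$-self-maps of $E(G)$, hence $K$-homotopic. Passing to $K$-orbit spaces yields $\bar\gamma \colon L/K \to E(G)/K$ and $\bar\alpha \colon E(G)/K \to L/K$ with $\bar\gamma \circ \bar\alpha \simeq \mathrm{id}$, so $E(G)/K \simeq K(K,1)$ is a homotopy retract of the CW complex $L/K$ of dimension $\tc(X)$.

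Finally, cohomology with arbitrary local coefficients vanishes above the dimension of a CW complex, so $H^i(L/K; \mathcal{N}) = 0$ for $i > \tc(X)$ and every local system $\mathcal{N}$. The retraction then forces $H^i(K; M) = 0$ for every $\Z K$-module $M$ and every $i > \tc(X)$, yielding $\cd(K) \leq \tc(X)$. The main subtlety is the cell-by-cell verification that restricting $L$ from $G$ to $K$ produces a free $K$-CW complex of the same dimension; this uses the intersection hypothesis on $K$ in an essential way. The remaining steps are routine applications of the universal property of classifying spaces and of the standard fact that cohomological dimension does not increase under passage to homotopy retracts.
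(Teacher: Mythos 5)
Your proof is correct and follows essentially the same route as the paper: restrict the factorization of $E(G)\to E_\D(G)$ through a $\tc(X)$-dimensional complex to the subgroup $K$, use the hypothesis $K\cap H=\{1\}$ for $H\in\D$ to see that the relevant spaces become free $K$-CW complexes (hence models for $E(K)$), and conclude by a dimension bound. The only cosmetic difference is that you take $L$ to be the skeleton via Theorem \ref{thm000} and pass to orbit spaces, arguing via homotopy domination of $K(K,1)$ by the $\tc(X)$-dimensional complex $L/K$, whereas the paper stays equivariant with the factorization from Theorem \ref{thm00}; both are valid.
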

\begin{proof}
Under these assumptions any model for $E_\mathcal{D}(G)$ is free and contractible when viewed as a $K$-CW-complex, hence it is a model for $E(K)$. The same is true for $E(G)$. 
Letting $k:=\tc(X)$, we get a sequence of $G$-maps $E(G)\to L\to E_\mathcal{D}(G)$, where $L$ is a $G$-CW complex of dimension $k$. Restricting to $K$-actions we get, up to $K$-homotopy, a factorisation of the identity map $E(K)\to L\to E(K)$. This obviously implies that any cohomology class 
in $H^m(\pi, M)$ with $m>k$ vanishes, i.e. 
 $\mathrm{cd}(K)\leq k$, as stated.
\end{proof}

As a particular case of the above, let $K=A\times B$ where $A$ and $B$ are subgroups of $\pi$ such that $gAg^{-1}\cap B=\{1\}$ for all $g\in \pi$. We obtain that $\mathrm{cd}(A\times B)\leq \tc(\pi)$, which recovers the main result of \cite{GrantLuptonOprea}. 

\section{Lower bounds for $\tc(X)$ via Bredon cohomology}

In this section we shall give lower bounds for the topological complexity using Bredon cohomology. 

First we recall the basic constructions. 

\subsection{The family $\D$}\label{secd}
Let $\pi$ be a discrete group, we shall denote $G=\pi\times\pi$.  
As above, we denote by $\mathcal D$ the smallest family of subgroups $H\subset \pi\times \pi=G$ which contains the diagonal 
$\Delta\subset \pi\times\pi$, the trivial subgroup and which is closed under taking conjugations and finite intersections. 
It is easy to see that a nontrivial subgroup $H\subset \pi\times\pi$ belongs to $\D$ iff it is of the form
$$H_{b, S} \, =\, \{(a, bab^{-1}), \, a\in Z(S)\},$$
where $b\in \pi$ and $Z(S)$ denotes the centraliser of a finite set of elements $S\subset \pi$, i.e. 
$Z(S) =\{a\in \pi, sa=as\, \, \mbox{for any}\, \, s\in S\}.$

We denote by $\od$ {\it the orbit category} with objects transitive left $G$-actions having isotropy in $\D$ and with $G$-equivariant maps as morphisms, see \cite{Bre}. Objects of the category $\od$ have the form $G/H$ where $H\in \D$. 

\subsection{$\od$-modules and their principal components}\label{pcomp} {\it A (right) $\od$-module} $\um$ is a contravariant functor on the category of orbits $\od$ with values in the category of abelian groups. Such a module 
is determined by the abelian groups $\um(G/H)$ where $H\in \D$, and by a group homomorphism 
$$\um(G/H) \to \um(G/H')$$ associated with any $G$-equivariant map $G/H' \to G/H$ satisfying the usual compatibility conditions, expressing the fact that $\um$ is a functor. 

The abelian group $M= \um(G/1)$ is a left $\zpp$-module; an element $(g, h)\in \pi\times \pi$ acts on $\pi\times\pi$ by right translation and applying 
the functor $\um$ this defines an action on $M$. We shall call the $\zpp$-module $M$ {\it the principal component of } $\um$.

\begin{example} \label{free} {\rm Let $X$ be a left $G$-set. One defines an $\od$-module $\um_X$ by $$\um_X(?)=\Z[?, X]_G.$$ In other words, 
$\um_X(G/H)$ is the free abelian group generated by the set of $G$-equivariant maps $$[G/H, X]_G \, =\, X^H.$$ 
The homomorphism associated to a
morphism $f: G/H' \to G/H$ is the map $X^H\to X^{H'}$ given by $x\mapsto f(1, 1)\cdot x\in X^{H'}$ for $x\in X^H$. 

If the set $X$ is such that the isotropy subgroup of any point $x\in X$ belongs to the family $\D$ then the $\od$-module $\um_X$ is {\it free and projective}, 
see \cite{tomD}, chapter 1 or \cite{Luebook}, chapter 2. 

The principal component of the $\od$-module $\um_X$ is $M=\Z[X],$ the free abelian group generated by $X$. The left action of $G$ on $\Z[X]$ is induced by the left action of $G$ on $X$. 

Any equivariant map between $G$-sets $f:X\to Y$ induces naturally a homomorphism of the $\od$-modules $f_\ast: \um_X\to \um_Y$. 
 }\end{example}

Next we consider a few special cases of the previous example.

\begin{example} \label{free0}{\rm 
  Taking $X=\ast$, the one point orbit, we obtain the module $\um_X$ which will be denoted $\uz$. It associates 
$\Z$ to any orbit $\pi\times\pi/H$ with the identity homomorphism associated to any morphism of the orbit category $\od$.
Note that $\uz$ is not a free $\od$-module since $G=\pi\times\pi$ is not in $\D$. }\end{example}

\begin{example}\label{free1}{\rm  
In Example \ref{free} take $X=\pi$, the group $\pi$ viewed as a $G=\pi\times \pi$-set via the action $(x, y)\cdot g=xgy^{-1}$. The isotropy subgroup of an element $g\in \pi$ is 
$\{(x, g^{-1}xg), x\in \pi\}$ which belongs to the family $\D$ and hence the Bredon module $\um_\pi$ is free. Note that $\um_\pi$ associates the abelian group 
 $\Z[\pi^H]$ to any orbit $G/H$. 

If $H=H_{b, S}$ then $\pi^H$ coincides with $Z(Z(S))\cdot b^{-1}$. In general, $\pi^H$ is not a subgroup. 
}\end{example}

\begin{example}\label{frees}{\rm This is a generalisation of the previous example. For an integer $s\ge 1$, consider the $s$-th Cartesian power $\pi^s$ as a $G=\pi\times\pi$-set via the action 
$(x, y)\cdot (g_1, \dots, g_s)= (xg_1y^{-1}, \dots, xg_sy^{-1}).$ The isotropy subgroup of an element $(g_1, \dots, g_s)$ is the intersection of the isotropy 
subgroups of $g_i$ for $i=1, \dots, s$, hence it can be presented as  $H_{b, S}$ with $b=g_1$ and $S=\{g_1g_2^{-1}, g_1g_3^{-1}, \dots, g_1g_s^{-1}\}$. 
We obtain a free Bredon module $\um_{\pi^s}$, $s\ge 1$. Its principal component is the module $\Z[\pi^s]$. 

}\end{example}

\subsection{Bredon cohomology} Now we recall the construction of Bredon cohomology, see for example \cite{Mis}. 

Let $X$ be a $G$-CW-complex such that the isotropy subgroup of every point $x\in X$ belongs to the family $\D$. For every subgroup $H\in \D$
we may consider the cell complex $X^H$ of $H$-fixed points and its cellular chain complex $C_\ast(X^H)$. A $G$-map $\phi: G/K\to G/L$, where $K, L\in \D$, induces a cellular map $X^L\to X^K$ by mapping $x\in X^L$ to $gx\in X^K$ where $g$ is determined by the equation $\phi(K)=gL$ (thus $g^{-1}Kgx=x$ since $g^{-1}Kg\subset L$ and therefore $Kgx=gx$, i.e. $gx\in X^K$). Thus we see that the chain complexes $C_\ast(X^H)$, considered for all $H\in \D$, form a 
chain complex of right $\od$-modules which will be denoted ${\underline C}_\ast(X)$; here
${\underline C}_\ast(X)(G/H) = C_\ast(X^H).$
The principal component of the $\od$-chain complex ${\underline C}_\ast(X)$ is the chain complex $C_\ast(X)$ of left $\Z[G]$-modules. 

Note that the complex  ${\underline C}_\ast(X)$ is free as a complex of $\od$-modules although the complex $C_\ast(X)$ might not be free as a complex of $\Z[G]$-modules.

There is an obvious augmentation 
$\epsilon: {\underline C}_0(X)\to \uz$ which reduces to the usual augmentation $C_0(X^H)\to \Z$ on each subgroup $H\in \D$. 

If $\um$ is a right $\od$-module, we may consider the cochain complex of $\od$-morphisms $\Hom_\od ({\underline C}_\ast(X), \um)$.  
Its cohomology 
\begin{eqnarray}
H_\D^\ast(X; \um) \, = \, H^\ast(\Hom_\od ({\underline C}_\ast(X), \um))
\end{eqnarray}
is {\it the Bredon equivariant cohomology of $X$ with coefficients in $\um$. }

Let $M$ denote the principal component of $\um$.  By reducing to the principal components we obtain a homomorphism of cochain complexes
$$\Hom_\od ({\underline C}_\ast(X), \um)\to \Hom_{\Z[G]} ({C}_\ast(X), M)$$
and the associated homomorphism on cohomology groups
\begin{eqnarray}\label{red}
 H^i_\D(X; \um) \, \to \, H^i_G(X, M).
\end{eqnarray}

\subsection{} \label{sec34} If the action of $G$ on $X$ is free then obviously the homomorphism (\ref{red}) is an isomorphism and 
$$H^i_\D(X; \um) \, \cong  \, H^i(X/G, M),$$
where on the right we have the usual twisted cohomology. In particular we obtain
$$H^n_\D(E(\pi\times\pi), \um) = H^n(\pi\times\pi, M).$$

\subsection{} \label{sec35} Suppose now that $X=E(\pi)$, viewed as a left $G$-CW-complex, where $G=\pi\times\pi$, see \S \ref{sec24}. 
We know that $E(\pi)$ is a model for the classifying space $E_\D(G)$ (as we established in \S \ref{sec24}) and the classifying complex $E_\D(G)$ is unique up to $G$-homotopy.  Hence we may use the notation
$$H^\ast_\D(E(\pi), \um) = H^\ast_\D(\pi\times\pi, \um).$$
 We obtain
that the number $\cd_\D(\pi\times\pi)$ coincides with the minimal integer $n$ such that 
$H^i_\D(\pi\times\pi, \um) =0$
for all $i>n$ and for all $\od$-modules $\um$. 

\subsection{} Consider now the effect of the equivariant map $F: E(\pi\times\pi) \to E(\pi)$, see (\ref{F}). Note that any two equivariant maps 
$E(\pi\times\pi) \to E(\pi)$ are equivariantly homotopic. 
The induced map on Bredon cohomology 
$$F^\ast: H^i_\D(E(\pi), \um) \to H^i_\D(E(\pi\times\pi), \um)$$ 
in the notations introduced in \S \ref{sec34} and \S\ref{sec35} produces a homomorphism 
\begin{eqnarray}\label{Phi}
\Phi: H^i_\D(\pi\times\pi, \um) \, \to \, H^i(\pi\times\pi, M)
\end{eqnarray}
which connects the Bredon cohomology with the usual group cohomology. 

Now we may state a result which gives useful lower bounds for the topological complexity $\tc(X)$. 

\begin{theorem}\label{lower}
Let $X$ be a finite aspherical cell complex with fundamental group $\pi$. Suppose that
for some $\od$-module $\um$
there exists a Bredon cohomology class 
$$\underline \alpha\in H^n_\D(\pi\times\pi, \um)$$
such that the class 
$$\Phi(\underline \alpha)\not=0\in H^n(\pi\times\pi, M)$$
is nonzero. 
Then $\tc(X) \ge n$. Here $M$ denotes the principal component of $\um$. 
\end{theorem}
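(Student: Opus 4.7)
The plan is to argue by contradiction, using Theorem \ref{thm000} to replace the hypothesis $\tc(X)\le n-1$ with a geometric statement about the factorisation of the canonical map $F:E(\pi\times\pi)\to E(\pi)$, and then to derive a vanishing statement in Bredon cohomology that contradicts the non-vanishing of $\Phi(\underline\alpha)$.

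Suppose, for the sake of contradiction, that $\tc(X)\le n-1$. By Theorem \ref{thm000}, the canonical $G$-map $F:E(G)\to E_\D(G)=E(\pi)$ is $G$-equivariantly homotopic to a map $\tilde F$ taking values in the $(n-1)$-skeleton $L:=E_\D(G)^{(n-1)}$. Thus $F$ fits (up to $G$-homotopy) into a commutative diagram of $G$-spaces
\begin{equation*}
E(G)\xrightarrow{\tilde F} L \xhookrightarrow{\iota} E_\D(G),
\end{equation*}
where $\iota$ is the skeletal inclusion. Note that $L$ inherits the isotropy from $E_\D(G)$, so all point stabilisers of $L$ lie in $\D$ and the Bredon cochain complex $\Hom_\od(\uc_\ast(L),\um)$ is defined.

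Next I would apply the functor $H^n_\D(-,\um)$ to this factorisation. Since $L$ is an $(n-1)$-dimensional $G$-CW-complex, the cellular chain $\od$-module $\uc_j(L)$ vanishes for $j\ge n$, so the cochain complex $\Hom_\od(\uc_\ast(L),\um)$ is zero in degrees $\ge n$; in particular $H^n_\D(L,\um)=0$. Consequently the induced map
\begin{equation*}
F^\ast=\tilde F^{\,\ast}\circ\iota^\ast : H^n_\D(E_\D(G),\um)\longrightarrow H^n_\D(E(G),\um)
\end{equation*}
factors through the zero group and is therefore identically zero.

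Finally I would identify $F^\ast$ with $\Phi$. By \S\ref{sec35} we have $H^n_\D(E_\D(G),\um)=H^n_\D(\pi\times\pi,\um)$, while \S\ref{sec34} gives the isomorphism $H^n_\D(E(G),\um)\cong H^n(\pi\times\pi,M)$ (since $G$ acts freely on $E(G)$), under which $F^\ast$ is exactly the homomorphism $\Phi$ defined in (\ref{Phi}). Thus $\Phi(\underline\alpha)=F^\ast(\underline\alpha)=0$, contradicting the hypothesis that $\Phi(\underline\alpha)\ne 0$. This contradiction forces $\tc(X)\ge n$, as required.

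I expect no substantive obstacle here; the only points that require care are verifying that the skeletal model $L=E_\D(G)^{(n-1)}$ has isotropy entirely in the family $\D$ (which is immediate since it is a $G$-subcomplex of $E_\D(G)$), and matching the homomorphism $F^\ast$ on Bredon cohomology with the map $\Phi$ of (\ref{Phi}) under the freeness identification of \S\ref{sec34}.
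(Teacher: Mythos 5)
Your proposal is correct and follows essentially the same argument as the paper: assume $\tc(X)<n$, factor $F$ up to $G$-homotopy through a $G$-CW-complex of dimension $<n$, note that its $n$-th Bredon cohomology vanishes, and conclude $\Phi(\ualpha)=0$, a contradiction. The only (harmless) difference is that you invoke the skeletal formulation (Theorem \ref{thm000}) and take $L=E_\D(G)^{(n-1)}$, whereas the paper uses the factorisation of Theorem \ref{thm00} through an arbitrary $G$-CW-complex $L$ of dimension $<n$; both yield the same vanishing of the middle group.
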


\begin{proof} Suppose that $\tc(X)<n$. Then by Theorem \ref{thm00} the map $F: E(\pi\times\pi) \to E(\pi)$ admits a factorisation
$$E(\pi\times\pi) \to L\to E(\pi)$$
where $L$ is a $G$-CW-complex of dimension less than $n$. Then the homomorphism 
$$\Phi: H^n_\D(\pi\times\pi, \um) \to H^n(\pi\times\pi, M)$$
factors as 
$$\Phi: H^n_\D(\pi\times\pi, \um) \to H^n_\D(L, \um) \to H^n(\pi\times\pi, M)$$
and the middle group vanishes since $\dim L <n$. This contradicts our assumption that $\Phi(\underline \alpha)\not=0$ for some $\underline \alpha\in H^n_\D(\pi\times\pi, \um)$. 
\end{proof}

\begin{remark}{\rm 
Theorem \ref{lower} can be compared to the classical result concerning the Lusternik - Schirelman category (see Eilenberg - Ganea \cite{EG} or Schwarz \cite{Sv66}) stating that for an aspherical space $X$ the existence of a nonzero cohomology class $H^n(X, M)$ (with some local coefficient system $M$) implies that $\cat(X)\ge n$. It is not true that $\tc(X)\ge n$ if $H^n(X\times X, M)\not=0$ for $X$ aspherical. For example, in the case of the circle $X=S^1$ we know that 
$\tc(X)=1$ while $H^2(X\times X,\Z)\not=0$. Theorem \ref{lower} imposes a condition on the nontrivial cohomology class in the usual twisted cohomology 
to be extendable to a class in Bredon cohomology. We will investigate this property further in  \S \ref{essential}. 
}
\end{remark}

\section{The canonical class in Bredon cohomology and its universality}

In this section we define a special Bredon cohomology class which will play an important role in this paper.

\subsection{The canonical class} \label{secan} Consider an $\od$-module $M_X(?)=\Z[?, X]_G$ where $X$ is a $G$-set, see
Example \ref{free}. Recall that $G$ denotes the group 
$\pi\times\pi$. The unique map $X\to \ast$ is $G$-invariant and induces a homomorphism of Bredon modules
$\epsilon: \um_X \to \um_\ast =\uz$, called {\it the augmentation}. We denote by $\ui_X$ the kernel of $\epsilon$. Clearly, $\ui_X$ is a Bredon module whose value 
on an orbit $G/H$ is $$\ui_X(G/H) = \ker[\epsilon: \Z[X^H] \to \Z].$$ 

As a special case of the previous construction we obtain the Bredon module $\ui_\pi$ (where $X=\pi$, as in Example \ref{free1}). 
Here 
\begin{eqnarray}\label{ideal}
\ui_\pi(G/H) = \ker[\epsilon: \Z[\pi^H] \to \Z] \, \equiv\, I (\pi^H).
\end{eqnarray} 
We shall shorten the notation $\ui_\pi$ to $\underline I$. 
The principal component of $\ui$ is the augmentation ideal $I=\ker[\epsilon: \zp\to \Z]$. 

One obtains a short exact sequence of Bredon modules
\begin{eqnarray}\label{sec1}
0\to \ui \to \um_\pi\stackrel{\epsilon}\to \uz \to 0.
\end{eqnarray}
The latter defines a Bredon cohomology class $$\uu\in \Ext_{\od}^1(\uz, \ui) \, \equiv \, H^1_\D(\pi\times\pi, \ui).$$ 
We shall call $\uu$ {\it the canonical class in Bredon cohomology}. It is a refinement of {\it the ordinary canonical class} $$\vv\in H^1(\pi\times \pi, I)$$ which  was defined in \cite{CosFar}. In \cite{FM}, \S 3 it is shown that $\vv$ coincides with the class represented by the 
principal components  of the sequence (\ref{sec1}), i.e. by the exact sequence of left $\zpp$-modules
\begin{eqnarray}\label{sec2}
0\to I \to \zp \to \Z \to 0.
\end{eqnarray}
Hence,
the principal component of the class $\uu$ (i.e. the image of $\uu$ under the homomorphism (\ref{Phi})), coincides with $\vv$. 

The canonical class $\vv$ is closely related to {\it the Berstein - Schwarz class} $${\mathfrak b}\in H^1(\pi, I)$$ which is represented by the exact sequence (\ref{sec2}) viewed as a sequence of left $\zp$-modules. 

%
%
%

\subsection{The classes $\uu^n$} Next we define classes $$\uu^n\in H^n_\D(\pi\times\pi, \ui^n), \quad n=1, 2, \dots.$$ 
 In this paper we shall treat these classes formally and call them {\it the powers of the canonical class} $\uu$ without trying to justify this name. However we shall show that the principal component of the class $\uu^n$ is the $n$-fold cup product 
$\vv\cup\vv\cup\dots\cup \vv=\vv^n$ of the canonical class $\vv\in H^1(\pi\times\pi, I)$. 

The Bredon module $\ui^n$ is defined by 
$$\ui^n(G/H) = I(\pi^H)\otimes_\Z I(\pi^H)\otimes_\Z \dots\otimes_\Z I(\pi^H), \quad H\in \D.$$
We shall define the class $\uu^n$ by describing an explicit exact sequence of $\od$-modules
\begin{eqnarray}\label{derham}
0\to \ui^n \to \underline C_{n-1}\stackrel{d}\to \underline C_{n-2}\stackrel{d}\to \dots\stackrel{d} \to \underline C_0\to \uz \to 0
\end{eqnarray}
in which the intermediate $\od$-modules $\underline C_0, \underline C_1, \dots, \underline C_{n-1}$ are projective. 
If $$\uP_\ast: \quad \cdots \uP_2\to \uP_1\to \uP_0\to \uz\to 0$$ is an $\od$-projective resolution of $\uz$, we obtain a commutative diagram (unique up to chain homotopy)
$$
\begin{array}{cclccccclcc}
\uP_{n+1}&\to &\uP_n&\to \uP_{n-1}&\to &\cdots& \uP_0&\to&\uz&\to& 0\\ \\
\downarrow && \downarrow f &\downarrow &&&   \downarrow  &&\downarrow = &\\ \\
0&\to &\ui^n&\to C_{n-1}&\to &\cdots& C_0&\to&\uz&\to& 0.
\end{array}
$$
The $\od$-homomorphism $f$ is a cocycle, and its cohomology class 
$$\{f\}\in H^n(\Hom_\od(\uP_\ast, \ui^n))=H^n_\D(\pi\times\pi, \ui^n)$$ is independent of the choice of the chain map represented by the diagram above. We define the {\it $n$-th power of the canonical class} $\uu^n$ as the cohomology class $\{f\}$.

The principal components of the exact sequence (\ref{derham}) define an exact sequence of left $\zpp=\Z[G]$-modules
$$
0\to \ui^n(G/1)=I^n  \to \underline C_{n-1}(G/1)\stackrel{d}\to \underline C_{n-2}(G/1)\stackrel{d}\to \dots\stackrel{d} \to \underline C_0(G/1)\to \Z \to 0. 
$$
This sequence determines a class in $$\Ext_{\zpp}^n(\Z, I^n)=H^n(\pi\times\pi, I^n)$$ which is {\it the principal component of the class $\uu^n$}. 
We shall identify the principal component of $\uu^n$ with $\vv^n$, see Theorem \ref{thm3}.

\subsection{Construction of the complex (\ref{derham})}\label{construction} Here we shall generalise a construction of Dranishnikov and Rudyak \cite{DranRud}; see also \cite{FM}. 

We shall use the operation $\otimes_\Z$ of tensor product of $\od$-modules which is defined as follows. For two right $\od$-modules $\um$ and $\un$ we define $\um\otimes_\Z\un$ by the formula
$$
\left(\um\otimes_\Z\un\right)(G/H) = \um(G/H)\otimes_\Z\un(G/H), \quad H\in \D,
$$
with the obvious action on morphisms. 

The following obvious remark will be used in the sequel. Suppose that 
$$0\to \um_1\to \um_2\to \um_3\to 0$$ is an exact sequence of right $\od$-modules and let $\un$ be a right $\od$-module such that 
 for any $H\in \D$ the module
$\un(G/H)$ is free as an abelian group. Then the sequence 
$$0\to \un\otimes_\Z\um_1 \to \un\otimes_\Z\um_2\to \un\otimes_\Z \um_3\to 0$$ 
is also exact. 

Let $X$ and $Y$ be left $G$-sets, where $G=\pi\times\pi$. Consider the $\od$-modules $\um_X$ and $\um_Y$, see Example \ref{free}. 
Note that the tensor product 
$\um_X\otimes_\Z \um_Y$ can be naturally identified with $\um_{X\times Y}$. 
We know that the modules $\um_X$, $\um_Y$ and  $\um_{X\times Y}$ are free iff the 
isotropy subgroups of all elements of $X$ and $Y$ belong to $\D$. 

Tensoring the short exact sequence 
\begin{eqnarray}\label{mxy}
0\to \ui_Y \to \um_Y \stackrel{\epsilon}\to \uz\to 0
\end{eqnarray}
 with $\um_X$ we obtain an exact sequence
\begin{eqnarray}\label{sec5}0\to \um_X \otimes_\Z \ui_Y\to \um_{X\times Y} \to \um_X\to 0\end{eqnarray}
in which $\um_X$ and $\um_{X\times Y}$ are free and hence the sequence (\ref{sec5}) splits. 
We conclude: 
{\it If the isotropy subgroups of all elements of  $X\sqcup Y$ belong to $\D$, then the $\od$-module
$$\um_X\otimes_\Z \ui_Y$$ is projective}. 
Taking in the above statement $X=\pi$ and $Y=\pi^r$, where $\pi^r$ is equipped with the left $\pi\times\pi$ action 
$(x, y)\cdot(a_1, \dots, a_r)= (xa_1y^{-1}, \cdots, xa_ry^{-1}),$ 
we obtain that 
{\it the $\od$-module $$\um_\pi \otimes_\Z \ui^{r}$$ is projective for any $r\ge 0$. } 
Here $\ui^r$ denotes the $r$-fold tensor product $\ui\otimes_\Z\ui \otimes_\Z\dots \otimes_\Z \ui$.

Starting from the short exact sequence (\ref{sec1}) and tensoring with $\ui$ we iteratively obtain short exact sequences of $\od$-modules
\begin{eqnarray}\label{20}
0\to \, \ui^r\, \stackrel{i\otimes 1}\to \, \um_\pi\otimes_\Z \ui^{r-1}\, \stackrel{\epsilon\otimes 1}\to \, \ui^{r-1} \to 0,\quad r=1, 2, \dots.
\end{eqnarray}
Splicing them for $r=1, 2, \dots, n$ we obtain the long exact sequence of $\od$-modules
\begin{eqnarray}\label{mpi}
0\to \ui^n\to \um_\pi\otimes_\Z \ui^{n-1} \to  \um_\pi\otimes_\Z \ui^{n-2} \to \dots\to \um_\pi\otimes_\Z \ui\to \um_\pi\to \uz\to 0.
\end{eqnarray}
This is a version of the complex (\ref{derham}). 
Naturally, there exist many other chain complexes representing the same cohomology class $\uu^n$.

\subsection{Universality of the canonical class} In this subsection we prove the following statement which is a generalisation of the well-known result of A.S. Schwarz
(see \cite{Sv66}, Proposition 34). 

\begin{theorem}\label{univ} For any $\od$-module $ \um$ and for any cohomology class $$\ualpha\in H^n_\D(\pi\times\pi, \um)$$ there exists
an $\od$-morphism $\phi: \ui^n\to  \um$ such that $\phi_\ast(\uu^n) = \ualpha$. 
\end{theorem}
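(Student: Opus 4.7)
The plan is to exploit the exact sequence (\ref{mpi}) as a projective partial resolution of $\uz$ of length $n-1$, so that $\ui^n$ appears as the $(n-1)$-st syzygy of $\uz$. Once this is recognised, any class $\ualpha \in H^n_\D(\pi\times\pi, \um) = \Ext^n_\od(\uz, \um)$ automatically factors through $\ui^n$ by a standard dimension-shifting argument, and the factorisation provides the desired coefficient homomorphism $\phi$.

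More concretely, let $\underline{Q}_\ast \to \ui^n \to 0$ be any $\od$-projective resolution of $\ui^n$, and splice it with the truncation $\um_\pi \otimes_\Z \ui^{n-1} \to \cdots \to \um_\pi \to \uz$ of (\ref{mpi}) to obtain a full $\od$-projective resolution $\uP_\ast \to \uz$: concretely, $\uP_i = \um_\pi \otimes_\Z \ui^i$ for $0 \le i \le n-1$ and $\uP_{n+j} = \underline{Q}_j$ for $j \ge 0$. Applied to this specific $\uP_\ast$, with (\ref{mpi}) itself playing the role of (\ref{derham}) (whose intermediate terms are projective by \S \ref{construction}), the chain-level recipe for $\uu^n$ given in \S \ref{secan} produces a chain map which is the identity in degrees $0,\ldots,n-1$ and whose degree-$n$ component is the augmentation $f : \uP_n = \underline{Q}_0 \to \ui^n$. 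Thus $\uu^n$ is represented by the cocycle $f$.

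Next, represent $\ualpha$ by an $\od$-cocycle $g: \uP_n \to \um$. Exactness of $\uP_\ast$, combined with the injectivity of $\ui^n \hookrightarrow \uP_{n-1}$ coming from (\ref{mpi}), gives $\ker(\uP_n \to \uP_{n-1}) = \ker f$, and this kernel equals the image of the differential $\uP_{n+1} \to \uP_n$; the cocycle condition on $g$ therefore forces $g$ to vanish on $\ker f$. Hence $g$ factors as $g = \phi \circ f$ for a unique $\od$-morphism $\phi: \ui^n \to \um$, and functoriality of $\Ext$ yields $\phi_\ast(\uu^n) = \phi_\ast\{f\} = \{\phi \circ f\} = \{g\} = \ualpha$, as required.

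The only point requiring care is the identification of $\uu^n$ with $f$ for the spliced resolution $\uP_\ast$; this is where the flexibility built into the definition of $\uu^n$ in \S \ref{secan} (allowing any projective resolution of $\uz$ and any realisation of (\ref{derham})) is exploited, and I expect this to be the main obstacle to making the dimension-shifting argument run cleanly. The remainder is routine cocycle manipulation.
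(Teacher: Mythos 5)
Your proposal is correct and follows essentially the same route as the paper: the paper likewise extends the complex (\ref{derham}) (of which (\ref{mpi}) is a realisation) to a full projective $\od$-resolution of $\uz$, notes that $\uu^n$ is then represented by the projection $\underline C_n\to\ui^n$, and factors a cocycle representing $\ualpha$ through that projection to obtain $\phi$ with $\phi_\ast(\uu^n)=\ualpha$. Your extra care in identifying the representing cocycle of $\uu^n$ for the spliced resolution is exactly the "unique up to chain homotopy" flexibility already built into the paper's definition, so there is no gap.
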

\begin{proof}
One may construct a projective $\od$-resolution of $\uz$ extending (\ref{derham})
$$\dots \stackrel{d}\to \underline C_n\stackrel{d}\to \underline C_{n-1}\stackrel{d}\to \underline C_{n-2}\stackrel{d}\to \dots\stackrel{d}\to \underline C_0\to \uz\to 0.$$
The class $\ualpha$ can be viewed as a cohomology class of the cochain complex $\Hom_\od (\underline C_\ast,  \um)$.  
Let $f: \underline C_n\to \um$ be a cocycle representing $\alpha$. In the diagram
$$
\begin{array}{ccccccc}
\underline C_{n+1}& \stackrel{d}\to & \underline C_n& \stackrel{d}\to & \ui^n &\stackrel{d}\to & 0\\ \\
&&\downarrow f&\swarrow \phi& &&\\ \\
&&\um&&&&
\end{array}
$$
the row is exact and the existence of a $\od$-homomorphism $\phi: \ui^n\to \um$ follows from the assumption that $f$ is a cocycle. 
We claim that $\phi_\ast(\uu^n) = \ualpha$. Indeed, the class $\uu^n$ is represented by a similar diagram 
$$
\begin{array}{ccccccc}
\underline C_{n+1}& \stackrel{d}\to & \underline C_n& \stackrel{d}\to & \ui^n &\stackrel{d}\to & 0\\ \\
&&\downarrow g&\swarrow {\rm {id}}& &&\\ \\
&&\ui^n&&&&
\end{array}
$$
implying that $\phi\circ g =f$. Hence we see that the cocycle representing the class $\ualpha$ is obtained from the cocycle representing $\uu^n$ by composing with $\phi$. 
\end{proof}

Theorem \ref{univ} obviously implies:

\begin{corollary}
One has 
$$\cd_\D(\pi\times\pi)= {\rm {height}}(\uu),$$
where the integer ${\rm {height}}(\uu)$ is defined as the largest $n$ such that the class $\uu^n\in H^n_\D(\pi\times\pi, \ui^n)$ is nonzero. 
\end{corollary}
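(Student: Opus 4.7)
My plan is to prove the equality $\cd_\D(\pi\times\pi) = {\rm {height}}(\uu)$ by establishing the two inequalities separately, with Theorem \ref{univ} doing essentially all the work for the harder direction.

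First I would set $d = \cd_\D(\pi\times\pi)$ and $h = {\rm {height}}(\uu)$. The inequality $h \le d$ is immediate from the definitions: by \S\ref{sec35}, the cohomological dimension $d$ is the smallest integer such that $H^i_\D(\pi\times\pi,\um) = 0$ for all $i > d$ and all $\od$-modules $\um$. Since $\uu^n \in H^n_\D(\pi\times\pi,\ui^n)$, the nonvanishing of $\uu^n$ forces $n \le d$, so in particular $h \le d$.

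The reverse inequality $d \le h$ is where Theorem \ref{univ} is essential. Suppose $n > h$, so that $\uu^n = 0$ in $H^n_\D(\pi\times\pi,\ui^n)$. For an arbitrary $\od$-module $\um$ and any class $\ualpha \in H^n_\D(\pi\times\pi, \um)$, Theorem \ref{univ} provides an $\od$-morphism $\phi : \ui^n \to \um$ with $\phi_\ast(\uu^n) = \ualpha$. But then $\ualpha = \phi_\ast(0) = 0$. Since $\um$ and $\ualpha$ were arbitrary, this shows $H^n_\D(\pi\times\pi,\um) = 0$ for every $\od$-module $\um$ whenever $n > h$, and hence $d \le h$.

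Combining the two inequalities yields $d = h$, which is the desired equality. There is no real obstacle here: the entire content of the corollary is packaged in the universality statement of Theorem \ref{univ}, and the argument is a direct analogue of Schwarz's classical derivation of the cohomological dimension as the height of the Berstein--Schwarz class. The only point one might want to verify carefully is that the vanishing of $\uu^n$ in $H^n_\D(\pi\times\pi,\ui^n)$ is indeed the correct hypothesis in the universality statement, i.e.\ that $\phi_\ast$ is well defined on cohomology classes — but this is built into the functorial interpretation of Bredon cohomology used throughout Section 4.
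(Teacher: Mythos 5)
Your argument is correct and is precisely the one the paper intends: the corollary is stated as an immediate consequence of Theorem \ref{univ}, and your two inequalities (nonvanishing of $\uu^h$ gives $h\le \cd_\D$, and universality plus $\uu^n=0$ for $n>h$ kills all of $H^n_\D(\pi\times\pi,\um)$, giving $\cd_\D\le h$) spell out exactly that deduction, using the characterisation of $\cd_\D$ from \S\ref{sec35}. No gaps.
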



\begin{theorem}\label{thm3} For any integer $n\ge 1$
the image of the class $$\uu^n\in H^n_\D(\pi\times\pi, \ui^n)$$ 
 under the homomorphism (\ref{Phi}) coincides with the $n$-fold cup-power
$$\Phi(\uu^n) \,  =\, \vv^n \, =\, \vv\cup \vv\cup \dots\cup \vv\, \in\,  H^n(\pi\times\pi, I^n)$$
of the canonical class $\vv\in H^1(\pi\times\pi, I)$. 
\end{theorem}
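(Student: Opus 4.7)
The plan is to compute $\Phi(\uu^n)$ directly from the defining exact sequence (\ref{mpi}) and to identify the resulting $\Z[G]$-extension with the standard Yoneda splice that computes $\vv^n$, where $G=\pi\times\pi$.

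First I would verify that $\Phi$ is given at the level of Yoneda extensions by simply passing to principal components. Any class in $H^n_\D(G,\um)$ is represented by an $n$-fold extension $0\to\um\to \underline C_{n-1}\to\cdots\to \underline C_0\to\uz\to 0$ with projective intermediate terms; lifting this against the free $\od$-resolution $\underline C_\ast(E(G))\to\uz$ produces a cocycle, and its restriction to $G/1$ is the corresponding lift against the free $\Z[G]$-resolution $C_\ast(E(G))\to\Z$. Hence $\Phi$ carries the Yoneda class of the $\od$-extension (\ref{mpi}) to the Yoneda class of its principal component.

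Next I would apply the principal-component functor to (\ref{mpi}). Since every Bredon module appearing is $\Z$-free at $G/1$ (a tensor product involving $\Z[\pi]$ and $I$), exactness is preserved, yielding the long exact sequence of $\Z[G]$-modules
\[
0\to I^n \to \Z[\pi]\otimes_\Z I^{n-1}\to \cdots\to \Z[\pi]\otimes_\Z I\to \Z[\pi]\to \Z\to 0,
\]
where $\Z[\pi]$ carries the $G$-action $(x,y)\cdot g=xgy^{-1}$ and tensor factors carry the diagonal action. By the previous paragraph its Yoneda class in $\Ext^n_{\Z[G]}(\Z,I^n)=H^n(G,I^n)$ equals $\Phi(\uu^n)$.

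The final step is to identify the class of this spliced extension with $\vv^n$. By \cite{FM}, \S 3, the sequence (\ref{sec2}) is the principal component of (\ref{sec1}) and represents $\vv$. For each $r\ge 1$ the short exact sequence $0\to I^r\to \Z[\pi]\otimes_\Z I^{r-1}\to I^{r-1}\to 0$ is obtained from (\ref{sec2}) by tensoring over $\Z$ with $I^{r-1}$; since $I^{r-1}$ is $\Z$-free this preserves exactness, and standard homological algebra identifies its Yoneda class with the cup product of $\vv$ with the identity class of $I^{r-1}$. Splicing these short exact sequences for $r=1,\dots,n$ reassembles the displayed sequence, and since Yoneda products on $\Ext$ correspond to cup products on $H^\ast(G,-)$ with twisted coefficients, its class is $\vv\cup\vv\cup\cdots\cup\vv=\vv^n$. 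Thus $\Phi(\uu^n)=\vv^n$.

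The main obstacle will be this last identification: justifying rigorously that the tensored short exact sequence represents the coefficient-changed canonical class and that Yoneda splicing corresponds to cup multiplication, with due attention to the diagonal $G$-action on tensor products. A concrete route avoiding heavy appeals to general cup-product formalism is to fix a bar-style free $\Z[G]$-resolution of $\Z$, write down an explicit $1$-cocycle for $\vv$ extracted from (\ref{sec2}), and verify by induction on $n$ that the displayed splice pulls back to the standard cup-product $n$-cocycle representing $\vv^n$.
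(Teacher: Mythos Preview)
Your proposal is correct and follows essentially the same route as the paper: pass to principal components of the defining exact sequence (\ref{mpi}) and identify the resulting $\Z[G]$-extension with the Yoneda splice representing $\vv^n$. The paper's proof is a one-line citation---it observes that the principal components of (\ref{mpi}) coincide with the complex (12) of \cite{FM} and invokes Lemma~3.1 there---whereas you have unpacked that lemma's content (tensoring (\ref{sec2}) with $I^{r-1}$, splicing, and matching Yoneda product with cup product), together with the preliminary observation that $\Phi$ acts on Yoneda classes by taking principal components.
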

\begin{proof} The principal components of the complex (\ref{mpi}) is exactly the chain complex (12) from \cite{FM} and our statement is identical to Lemma 3.1 from \cite{FM}. 
\end{proof}

\section{Principal $\od$-modules}

In this section $G$ denotes the group $\pi\times\pi$ and $\D$ is the family of subgroups of $G$ defined in \S \ref{secd}

\subsection{} Let $\um$ be an $\od$-module. {\it The principal component} of $\um$ is defined as 
$\um(G/1) = A$ which (as we noted in \S \ref{pcomp}) has the structure of a left $\Z[G]$-module. Note that for any orbit $G/H$
we have an $\od$-morphism $f_H: G\to G/H$ given by $g\mapsto gH$ which induces a homomorphism 
$$\um(f_H): \um(G/H) \to \um(G/1)=A.$$
For $a\in H$ we have $f_H =f_H\circ r_a$ 
%
%
where $r_a: G\to G$ is the right multiplication by $a$, i.e. $r_{a}(g)=ga$. Applying the functor $\um$ we see that the homomorphism 
$\phi_H\equiv \um(f_H)$ takes values in $A^H$, i.e. 
\begin{eqnarray}\label{ah}
\phi_H: \um(G/H) \to A^H. 
\end{eqnarray}
\begin{definition} \label{defprin} We shall say that an $\od$-module $\um$ is principal if for any subgroup $H\in \D$ 
the homomorphism 
\begin{eqnarray}\label{isom}\phi_H=\um(f_H)\, :\,  \um(G/H)\to A^H\end{eqnarray} 
is an isomorphism. 
\end{definition}
Let $\um$ be a principal $\od$-module. Let $H, K\in \D$ and let $a\in G$ be such that $a^{-1}Ha\subset K$. Then we have an $\od$-morphism
$f_a: G/H\to G/K$ where $f_a(gH) = gaK$ for any $g\in G$. 
We obtain the commutative diagram
$$
\begin{array}{ccc}
G& \stackrel{r_a}\to & G\\
f_H \downarrow & & \downarrow f_K\\
G/H & \stackrel{f_a}\to & G/K.
\end{array}
$$
of orbits and applying the functor $\um$ we obtain the commutative diagram 
$$
\begin{array}{ccc}
A^K& \stackrel{r_a^\ast}\to & A^H\\ 
\phi_K \uparrow \simeq & & \simeq \uparrow \phi_H\\ 
\um(G/K)& \stackrel{f_a^\ast}\to & \um(G/H).
\end{array}
$$
where $r_a^\ast$ is multiplication by $a$. 
Thus we see that the structure of a principal $\od$-module $\um$ is fully determined by the left $\Z[G]$-module 
$A$ (the principal component of $\um$). 
Viewing $A$ as a left $G$-set we may write
$$\um(G/H) = [G/H, A] = A^H.$$

Principal modules appear in the book of G. Bredon \cite{Bre} as Example (2), page I-10. 

\subsection{} As an example consider the $\od$-module $\um_X(?)=\Z[?, X]_G$ (see Example \ref{free}) where $X$ is a left $G$-set. In this case the principal component is 
$\Z[X]$ viewed as a left $\Z[G]$-module. For an orbit $G/H$ with $H\in \D$ we have $\um_X(G/H) =\Z[X^H]$ and the map 
$f_H: G/1 \to G/H$ induces a homomorphism 
\begin{eqnarray}\label{xh}
\Z[X^H]\to (\Z[X])^H
\end{eqnarray}
which in general is an inclusion. 

\begin{lemma}\label{prin1}
The homomorphism (\ref{xh}) is an isomorphism if and only if for any $H\in \D$, the set $X$ viewed as an $H$-set, has the following property: any 
orbit of $H$ contained in $X$ is either infinite or a single point. 
\end{lemma}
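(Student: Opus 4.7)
The plan is to analyze both sides of the homomorphism $\Z[X^H]\to (\Z[X])^H$ in terms of $H$-orbits in $X$, and show that the condition for bijectivity translates exactly into the stated orbit condition.

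First I would describe $(\Z[X])^H$ explicitly. An element $\sum_{x\in X} n_x\cdot x\in \Z[X]$ is fixed by $H$ if and only if the coefficient function $x\mapsto n_x$ is constant on $H$-orbits in $X$. Since formal sums have finite support, only orbits on which $n_x\neq 0$ contribute, and these orbits must themselves be finite. Therefore $(\Z[X])^H$ is the free abelian group on the set of \emph{finite} $H$-orbits in $X$, with a finite orbit $O\subseteq X$ corresponding to the orbit sum $\sum_{y\in O}y\in \Z[X]$.

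Next I would describe $\Z[X^H]$ and the map. By definition $X^H\subseteq X$ is the set of $H$-fixed points in $X$, i.e.\ the set of singleton orbits. The homomorphism (\ref{xh}) sends a basis element $x\in X^H$ to the orbit sum of its singleton orbit $\{x\}$, so under the identification above it corresponds to the inclusion of the set of singleton orbits into the set of finite orbits. In particular the map is always injective (singleton orbit sums are linearly independent in $\Z[X]$).

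Finally, the map is surjective precisely when every finite $H$-orbit in $X$ is a singleton, i.e.\ every $H$-orbit in $X$ is either a single fixed point or infinite. This gives the equivalence stated in the lemma; no step is genuinely hard, the main content is simply the identification of $(\Z[X])^H$ as the span of finite-orbit sums, after which the equivalence is immediate.
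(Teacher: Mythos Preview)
Your proposal is correct and follows essentially the same approach as the paper: both arguments decompose $X$ into $H$-orbits, identify $(\Z[X])^H$ with the span of sums over finite orbits (using that an $H$-invariant element has constant coefficients on each orbit and finite support), and observe that $\Z[X^H]$ corresponds to the singleton orbits, so the map is an isomorphism precisely when no finite non-singleton orbit exists. Your write-up is slightly more streamlined in that you state the identification of $(\Z[X])^H$ up front, whereas the paper treats the two directions separately, but the content is the same.
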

\begin{proof} Suppose that $X$ satisfies the condition of the Lemma. For $H\in \D$ we may split $X$ into a disjoint union of $H$-orbits $X=\sqcup_j X_j$
where each $X_j$ is either a single point or infinite. Then $\Z[X] =\oplus_j \Z[X_j]$ and $\Z[X]^H =\oplus_j \Z[X_j]^H$ with $\Z[X_j]^H = \Z[X_j]$ if $X_j$ is a single point and $\Z[X_j]^H = 0$ if $X_j$ is infinite. On the other hand the set $X^H$ is the union of the sets $X_j$ which are single points. Hence (\ref{ah}) is an isomorphism. 

The inverse statement follows similarly. Namely, suppose that $X_j\subset X$ is a finite $G$-orbit which is not a single point. Then the element
$$\sum_{x\in X_j} x\, \, \in\,  \Z[X]$$
is invariant with respect to $H$, i.e. it lies in $\left(\Z[X]\right)^H$ but not in $\Z[X^H]$. 
\end{proof}

We want to restate Lemma \ref{prin1} in terms of the isotropy subgroups of points of $X$. For a point $x\in X$ denote by $I(x)\subset \pi\times\pi$ its isotropy subgroup. For a subgroup $H\subset G=\pi\times\pi$ one has $x\in X^H$ iff $H\subset I(x)$. The orbit of $x$ with respect to $H$ is finite iff 
$H$ contains $I(x)\cap H$ as a finite index subgroup. Thus we obtain the following Corollary:

\begin{corollary}\label{cor34a}
The $\od$-module $\um_X$ is principal if and only if for any $x\in X$ and any subgroup $H\in \D$ the index $[H:H\cap I(x)]$ is either $1$ or $\infty$. 
\end{corollary}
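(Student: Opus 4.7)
The plan is to deduce this corollary directly from Lemma \ref{prin1} by translating the orbit-theoretic condition into an index condition via the orbit-stabiliser theorem.

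First I would fix $x \in X$ and a subgroup $H \in \D$, and analyse the $H$-orbit $H \cdot x \subset X$. Since the stabiliser of $x$ inside the group $H$ (acting on $X$ by restriction of the $G$-action) is precisely $H \cap I(x)$, the orbit-stabiliser theorem gives a bijection $H\cdot x \leftrightarrow H/(H\cap I(x))$, so the cardinality of the orbit equals the index $[H : H \cap I(x)]$.

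Next I would observe the two elementary equivalences: the orbit $H \cdot x$ consists of a single point precisely when $H \subseteq I(x)$, i.e.\ when $H \cap I(x) = H$, i.e.\ when $[H : H\cap I(x)] = 1$; and the orbit is infinite precisely when $[H : H\cap I(x)] = \infty$. Consequently, the hypothesis of Lemma \ref{prin1}, namely that every $H$-orbit contained in $X$ is either infinite or a single point, is equivalent to saying that for every $x \in X$ and every $H \in \D$ the index $[H : H\cap I(x)]$ belongs to $\{1, \infty\}$.

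Finally, Lemma \ref{prin1} asserts that this condition on $H$-orbits is equivalent to the homomorphism \eqref{xh} being an isomorphism for every $H \in \D$, which by Definition \ref{defprin} is exactly the statement that $\um_X$ is a principal $\od$-module. Combining these equivalences yields the corollary. I do not anticipate any real obstacle here since the entire argument is a routine repackaging of Lemma \ref{prin1} via orbit-stabiliser; the only point requiring mild care is to note that $H \cap I(x)$ really is the $H$-stabiliser of $x$, which is immediate from the definitions.
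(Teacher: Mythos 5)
Your proposal is correct and follows essentially the same route as the paper, which also obtains the corollary by restating Lemma \ref{prin1} via the observations that $x\in X^H$ iff $H\subset I(x)$ (orbit a single point iff $[H:H\cap I(x)]=1$) and that the $H$-orbit of $x$ is finite iff $I(x)\cap H$ has finite index in $H$. The orbit-stabiliser bookkeeping you spell out is exactly the paper's (brief) argument.
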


For free $\od$-modules $\um_X$ the set $X$ has all isotropy subgroups in $\D$. This leads to the following Corollary:

\begin{corollary} \label{cor34} Suppose that for any two subgroups $H, H'\in \D$ the index $[H:H\cap H']$ is either 1 or $\infty$. Then any free $\od$-module is principal. 
\end{corollary}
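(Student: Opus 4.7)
The plan is to combine the hypothesis directly with Corollary \ref{cor34a}. First I would recall from Example \ref{free} that every free $\od$-module has the form $\um_X$, where $X$ is a left $G$-set all of whose point-isotropy subgroups $I(x) \subset G$ belong to the family $\D$. This is the only structural input we need about free modules; principality is then a pointwise condition on the isotropies of $X$.

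Next, I would invoke Corollary \ref{cor34a}, which characterises principality of $\um_X$ as the condition that for every $x \in X$ and every $H \in \D$, the index $[H : H \cap I(x)]$ is either $1$ or $\infty$. Since $\um_X$ is free, $I(x) \in \D$, so we may apply the standing hypothesis of Corollary \ref{cor34} with $H' = I(x)$. This yields exactly the required dichotomy for $[H : H \cap I(x)]$, and hence $\um_X$ is principal.

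There is essentially no obstacle here; the corollary is a clean specialisation of Corollary \ref{cor34a} once one recognises that freeness forces each $I(x)$ to lie in $\D$, so that the hypothesis on pairs of subgroups in $\D$ can be applied with the second argument taken to be a point-isotropy. The only mild care needed is to be explicit that "free $\od$-module" is being read in the sense of Example \ref{free}, i.e.\ as $\um_X$ for some $G$-set $X$ with $\D$-isotropy, so that the hypothesis is applicable in the form required by Corollary \ref{cor34a}.
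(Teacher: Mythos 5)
Your proof is correct and matches the paper's own (implicit) argument: the paper simply notes in the line before the corollary that a free $\od$-module is $\um_X$ for a $G$-set $X$ with all isotropy subgroups in $\D$, and then the conclusion is immediate from Corollary \ref{cor34a} with $H'=I(x)$, exactly as you say.
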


Note that the property of the family of subgroups $\D$ described in Corollary \ref{cor34} is in fact a property of the group $\pi$ since the family $\D$ depends on the group
$\pi$ alone. 

\begin{definition}\label{defprincipal}
We shall say that a group $\pi$ is principal if any of the following equivalent conditions is satisfied:
\begin{enumerate}
\item[{(a)}] Any free $\od$-module is principal, 
\item[{(b)}] For any two subgroups $H, H'\in \D$, the index $[H:H\cap H']$ is either 1 or infinity, 
\item[{(c)}] For any two finite subsets $S, S'\subset \pi$ the group $Z(S)/Z(S\cup S')$ is either infinite or trivial.
\end{enumerate}
\end{definition}

Recall that the symbol $Z(S)$ denotes the centraliser of $S$, i.e. the set of all elements $g\in \pi$ which commute with every element of $S$. 
The equivalence between (a) and (b) follows from Corollaries \ref{cor34a} and \ref{cor34}. The equivalence $(b)\sim (c)$ follows from the structure of the groups  $H\in \D$.

\begin{example}{\rm 
Let $\pi=\Z^n$. Then the class $\D$ contains only two subgroups, the trivial subgroup and the diagonal $\Delta$. The condition of Corollary \ref{cor34} is clearly satisfied, i.e. $\Z^n$ is a principal group. }
\end{example}
Other examples of principal groups will be described in \S \ref{sec:examples}.


\begin{lemma} \label{lm622} Let 
$0\to \um_1\stackrel{\alpha}\to \um_2\stackrel{\beta}\to \um_3$
be an exact sequence of $\od$-modules such that the modules $\um_2$ and $\um_3$ are principal. 
Then the module $\um_1$ is also principal. 
\end{lemma}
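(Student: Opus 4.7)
The plan is to exploit the fact that a short exact sequence of $\od$-modules evaluates to a short exact sequence at every orbit $G/H$, and then compare this with the left-exact functor of $H$-invariants applied to the sequence of principal components.

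Concretely, write $A_i = \um_i(G/1)$ for the principal component of $\um_i$. Evaluating the given sequence at $G/1$ yields an exact sequence
$$0 \to A_1 \to A_2 \to A_3$$
of left $\Z[G]$-modules, and taking $H$-invariants (which is left exact) produces an exact sequence
$$0 \to A_1^H \to A_2^H \to A_3^H.$$
On the other hand, evaluating the given sequence at $G/H$ produces an exact sequence
$$0 \to \um_1(G/H) \to \um_2(G/H) \to \um_3(G/H).$$
The naturality of the maps $\phi_H^i = \um_i(f_H)$ in the homomorphism $f_H : G \to G/H$ assembles these into a commutative diagram
$$
\begin{array}{ccccccc}
0 \to & \um_1(G/H) & \to & \um_2(G/H) & \to & \um_3(G/H) \\
& \downarrow \phi_H^1 & & \downarrow \phi_H^2 & & \downarrow \phi_H^3 \\
0 \to & A_1^H & \to & A_2^H & \to & A_3^H
\end{array}
$$
with exact rows, in which $\phi_H^2$ and $\phi_H^3$ are isomorphisms by the principality of $\um_2$ and $\um_3$.

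A routine diagram chase (essentially the left half of the four lemma) then shows $\phi_H^1$ is an isomorphism. Injectivity is immediate from the injectivity of $\phi_H^2$ and commutativity of the left square. For surjectivity, given $a \in A_1^H$, lift its image in $A_2^H$ uniquely to some $m_2 \in \um_2(G/H)$ via $\phi_H^2$; since the image of $a$ in $A_3^H$ is zero, the image of $m_2$ in $\um_3(G/H)$ is killed by the isomorphism $\phi_H^3$, hence is zero; by exactness at $\um_2(G/H)$, lift $m_2$ to an element $m_1 \in \um_1(G/H)$; then $\phi_H^1(m_1)$ and $a$ have the same image in $A_2^H$, and injectivity of $A_1^H \to A_2^H$ gives $\phi_H^1(m_1) = a$.

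Since this holds for every $H \in \D$, the module $\um_1$ is principal. The only subtle ingredient is keeping track of the fact that the left exactness of $H$-invariants matches exactly the left exactness obtained from evaluating the $\od$-sequence at $G/H$; no deeper obstacle arises, since one does not need right exactness anywhere in the argument.
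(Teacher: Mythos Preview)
Your proof is correct and follows essentially the same approach as the paper: both set up the same commutative diagram with exact rows and conclude that $\phi_H^1$ is an isomorphism. The paper simply invokes the 5-lemma, whereas you spell out the diagram chase and also make explicit why the bottom row is exact (via left exactness of $H$-invariants on the principal-component sequence), a point the paper leaves implicit.
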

\begin{proof}  Denote $G=\pi\times\pi$ for short. 
For any $H\in \D$ we have the following commutative diagram 
$$
\begin{array}{ccccccc}
0&\to & M_1(G/H)& \stackrel{\alpha}\to & M_2(G/H)& \stackrel{\beta}\to & M_3(G/H)\\ \\
&& \downarrow \phi^1_H && \downarrow \phi^2_H && \downarrow \phi^3_H \\ \\
0 & \to & A_1^H & \stackrel{\alpha}\to & A_2^H & \stackrel{\beta}\to & A_3^H
\end{array}
$$
The rows are exact and $\phi^2_H$ and $\phi^3_H$
are isomorphisms. 
By the 5-lemma we obtain that $\phi_H^1$ is also an isomorphism. Hence $\um_1$ is principal. 
\end{proof}

Lemma \ref{lm622} can also be stated as saying that the kernel of a $\od$-morphism of principal Bredon modules is principal. 

\begin{corollary}\label{cor48}
Assume that the group $\pi$ is principal. Then the $\od$-module $\ui^n$ is principal for any $n\ge 1$. 
\end{corollary}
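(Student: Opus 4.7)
The plan is to proceed by induction on $n$, applying Lemma \ref{lm622} at each step to a suitable short exact sequence. For the base case $n=1$, I would invoke the short exact sequence (\ref{sec1}), namely $0 \to \ui \to \um_\pi \to \uz \to 0$. Here $\um_\pi$ is a free $\od$-module (since $\pi$ as a $G$-set has all isotropies in $\D$), hence principal by condition (a) of Definition \ref{defprincipal}. The module $\uz$ is trivially principal. Lemma \ref{lm622} then gives that $\ui$ is principal.

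For the inductive step, assuming $\ui^{n-1}$ is principal, I would apply Lemma \ref{lm622} to the short exact sequence (\ref{20}) with $r=n$:
\begin{eqnarray*}
0 \to \ui^n \to \um_\pi\otimes_\Z \ui^{n-1} \to \ui^{n-1} \to 0.
\end{eqnarray*}
The right-hand term is principal by the induction hypothesis, so it remains to show that the middle term $\um_\pi\otimes_\Z \ui^{n-1}$ is principal.

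For this I would prove the following auxiliary claim: if $\pi$ is principal and $\un$ is \emph{any} principal $\od$-module with principal component $N$, then $\um_\pi\otimes_\Z \un$ is principal. Evaluating yields $(\um_\pi\otimes_\Z \un)(G/H)=\Z[\pi^H]\otimes_\Z N^H$ (using that both $\um_\pi$ and $\un$ are principal), while the principal component is $\Z[\pi]\otimes_\Z N$. The task thus reduces to checking that the natural map $\Z[\pi^H]\otimes_\Z N^H \to (\Z[\pi]\otimes_\Z N)^H$ is an isomorphism for every $H\in \D$. Writing an element of $\Z[\pi]\otimes_\Z N \cong \bigoplus_{g\in\pi} N$ as a finitely-supported function $f:\pi\to N$ with the diagonal $H$-action, a fixed point is exactly an $H$-equivariant such $f$. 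Decomposing $\pi$ into $H$-orbits, condition (b) of Definition \ref{defprincipal} forces each orbit to be either a singleton or infinite: singleton orbits each contribute a copy of $N^H$, while on an infinite orbit any nonzero $H$-equivariant $f$ would have infinite support, a contradiction. This yields the required equality.

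Applying the auxiliary claim with $\un=\ui^{n-1}$ shows that $\um_\pi\otimes_\Z\ui^{n-1}$ is principal, completing the induction. The main obstacle is precisely the interchange of $\otimes_\Z$ with $H$-fixed points in the auxiliary claim; this fails for arbitrary modules, but works here because the principal assumption on $\pi$ rules out the only bad case, namely finite nonsingleton $H$-orbits.
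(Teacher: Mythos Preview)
Your proof is correct. The overall architecture --- induction on $n$ via the short exact sequence (\ref{20}) together with Lemma \ref{lm622} --- is exactly the paper's. The difference lies in how you establish that the middle term $\um_\pi\otimes_\Z\ui^{n-1}$ is principal. You prove the stronger auxiliary claim that $\um_\pi\otimes_\Z\un$ is principal for \emph{any} principal $\un$, via a direct fixed-point computation: writing $\Z[\pi]\otimes N\cong\bigoplus_{g\in\pi}N$, decomposing $\pi$ into $H$-orbits, and using condition (b) of Definition \ref{defprincipal} to rule out finite non-singleton orbits. The paper instead stays inside the Lemma \ref{lm622} framework: it first observes that for $G$-sets $X,Y$ with isotropy in $\D$ the module $\um_X\otimes_\Z\ui_Y$ is principal, because it is the kernel in the exact sequence (\ref{sec5}) whose other two terms $\um_{X\times Y}$ and $\um_X$ are free and hence principal; iterating this (tensoring repeatedly with $\ui$) then yields that $\um_\pi\otimes_\Z\ui^{n-1}$ is principal. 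Your route is more elementary and gives a cleaner general statement; the paper's route avoids the elementwise orbit analysis by recycling Lemma \ref{lm622} one more time.
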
 
\begin{proof} First let us make the following general remark. Let $X$ and $Y$ be left $\pi\times\pi$-sets with all isotropy subgroups in $\D$. 
Then the $\od$-module $\um_X\otimes_\Z \ui_Y$ is principal as follows 
by applying 
Lemma \ref{lm622} to the exact sequence (\ref{sec5}) and noting that the free modules $\um_{X\times Y}$ and $\um_X$ are principal. 

The statement of 
Corollary \ref{cor48} now follows by inductively applying the above remark to the exact sequence (\ref{20}). \end{proof}

Morphisms between principal modules are determined by their effect on the principal components:

\begin{lemma} \label{thm4} Let $\um_1$ and $\um_2$ be principal $\od$-modules. Let $A_1$ and $A_2$ be their principal components. 
Then the map 
\begin{eqnarray}\label{mor}
\Hom_\od(\um_1, \um_2) \to \Hom_\zpp(A_1, A_2),
\end{eqnarray}
associating with any morphism its effect on the principal components, is an isomorphism. 
\end{lemma}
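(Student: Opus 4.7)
The plan is to exhibit an inverse to the restriction map (\ref{mor}), exploiting the fact that a principal $\od$-module is completely recovered from its principal component via the isomorphisms $\phi_H$ of Definition \ref{defprin}. I would organise the argument into injectivity and surjectivity.

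For injectivity, suppose $f: \um_1 \to \um_2$ is an $\od$-morphism whose restriction $f(G/1): A_1 \to A_2$ vanishes. For each $H \in \D$, the $\od$-morphism $f_H: G/1 \to G/H$ induces, by naturality of $f$, a commutative square
$$
\begin{array}{ccc}
\um_1(G/H) & \stackrel{f(G/H)}\longrightarrow & \um_2(G/H) \\
\phi^1_H \downarrow & & \downarrow \phi^2_H \\
A_1^H & \stackrel{f(G/1)|_{A_1^H}}\longrightarrow & A_2^H.
\end{array}
$$
Since $\phi^1_H$ and $\phi^2_H$ are isomorphisms and the bottom arrow is zero, it follows that $f(G/H) = 0$, hence $f = 0$.

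For surjectivity, given a $\zpp$-linear map $\psi: A_1 \to A_2$, note that $\psi$ is $G$-equivariant and therefore restricts, for each $H \in \D$, to a homomorphism $\psi^H: A_1^H \to A_2^H$. Define
$$f(G/H) \, := \, (\phi^2_H)^{-1} \circ \psi^H \circ \phi^1_H \, : \, \um_1(G/H) \to \um_2(G/H).$$
Its principal component (taking $H = 1$) is visibly $\psi$, so the remaining task is to verify that the collection $\{f(G/H)\}_{H \in \D}$ is natural in the orbit $G/H$. Every morphism in $\od$ has the form $f_a: G/H \to G/K$ for some $a \in G$ with $a^{-1}Ha \subset K$, and the discussion following Definition \ref{defprin} identifies the induced map $\um(f_a): \um(G/K) \to \um(G/H)$ with the map $A^K \to A^H$ given by left multiplication by $a$, once the principal isomorphisms $\phi_H, \phi_K$ are invoked. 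Thus naturality of $f$ reduces to the evident fact that $\psi$ commutes with multiplication by $a$, i.e.\ that $\psi$ is $\Z[G]$-linear.

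The main technical point will be the surjectivity step, specifically the bookkeeping that verifies naturality with respect to a general orbit morphism $f_a$; everything else is a formal consequence of $\phi_H$ being an isomorphism. Once that compatibility is in place, the two constructions are mutually inverse by construction, so (\ref{mor}) is an isomorphism.
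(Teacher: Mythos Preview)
Your proof is correct and follows essentially the same approach as the paper: both use the commutative square involving the isomorphisms $\phi^i_H$ to conclude that each component $f(G/H)$ is determined by, and can be reconstructed from, the restriction of $f(G/1)$ to $A_1^H$. The paper's argument is terser---it records the diagram and simply asserts that $f_H$ is uniquely determined by $f_1|_{A_1^H}$---whereas you separate injectivity from surjectivity and make the naturality check for the reconstructed morphism explicit.
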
 
\begin{proof} Let $f:\um_1\to \um_2$ be an $\od$-morphism. The map (\ref{mor}) associates with $f$ the $\zpp$-homomorphism 
$f_1: \um_1(G/1)=A_1\to \um_2(G/1)=A_2$. 
We have the following commutative diagram
$$
\begin{array}{ccc}
\um_1(G/H)& \stackrel{\phi^1_H}\to & A_1^H\\ \\
\downarrow f_H&&\downarrow f_1^H\\ \\
\um_2(G/H)& \stackrel{\phi^2_H}\to & A_2^H\end{array}
$$
in which $\phi_H^1$ and $\phi_H^2$ are isomorphisms. 
Thus, we see that the homomorphism $f_H$ is uniquely determined by the 
restriction $f_1^H$ of $f_1$ onto $A_1^H$. 
\end{proof}
\begin{corollary}\label{cor49}
Let $\uc_\ast$ be a chain complex of principal $\od$-modules and let $\um$ be a principal $\od$-module. 
Then the canonical map 
$$\Hom_\od(\uc_\ast, \um) \to \Hom_\zpp(C_\ast, M)$$
is an isomorphism of chain complexes. 
Here $C_\ast = \uc_\ast(G/1)$ is the principal component of $\uc_\ast$ and $M=\um(G/1)$ is the principal component of $\um$. 
\end{corollary}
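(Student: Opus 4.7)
The plan is to deduce the statement directly from Lemma \ref{thm4} by applying it degreewise. Both sides are cochain complexes of abelian groups whose coboundaries are induced by precomposition with the chain differential of $\uc_\ast$: on the Bredon side with $d: \uc_{n+1} \to \uc_n$, and on the principal-component side with the induced map $d_1: C_{n+1} \to C_n$ obtained by evaluating $d$ at the orbit $G/1$.

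First I would fix $n$ and apply Lemma \ref{thm4} to the pair $(\uc_n, \um)$. Since the hypothesis gives that $\uc_n$ and $\um$ are both principal $\od$-modules, the lemma yields a canonical isomorphism
\[
\Phi_n \colon \Hom_\od(\uc_n, \um) \xrightarrow{\ \cong\ } \Hom_\zpp(C_n, M),
\]
namely restriction of an $\od$-morphism $f$ to its principal component $f_1 = f_{G/1}$. This handles every cochain degree at once, using nothing beyond the already-proved Lemma \ref{thm4}.

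Second I would check that the family $\{\Phi_n\}$ is a cochain map, i.e.\ that it intertwines the two coboundaries. This is a purely functorial verification: evaluation at $G/1$ is a functor from $\od$-modules to $\zpp$-modules, so for any $f \in \Hom_\od(\uc_n, \um)$ one has $(f \circ d)_{G/1} = f_{G/1} \circ d_{G/1} = \Phi_n(f) \circ d_1$, which is precisely the coboundary of $\Phi_n(f)$ in $\Hom_\zpp(C_\ast, M)$. Combined with the degreewise isomorphisms $\Phi_n$ from the previous step, this gives the required isomorphism of cochain complexes.

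There is essentially no obstacle here: the corollary is little more than a packaging of Lemma \ref{thm4} applied in each degree, with the commutativity with differentials arising from the naturality of the assignment $\um \mapsto \um(G/1)$. The only point worth remarking on is that the hypothesis is on \emph{every} term $\uc_n$ being principal (not merely that $\uc_\ast$ has a principal component), which is exactly what is needed to invoke Lemma \ref{thm4} in every degree.
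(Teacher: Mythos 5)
Your proposal is correct and follows essentially the same route as the paper, whose proof is simply to invoke Lemma \ref{thm4}; your degreewise application of that lemma together with the naturality check that evaluation at $G/1$ intertwines the coboundaries is exactly the intended argument, just spelled out in more detail.
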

\begin{proof}
This follows from Lemma \ref{thm4}. 
\end{proof}

\begin{corollary}\label{isomorphism}
Suppose that the group $\pi$ is principal. 
Let $C_\ast$ be the chain complex of left $\zpp$-modules consisting of principal components of a projective
$\od$-resolution of $\uz$. 
Then the natural map
$$ H^n_\D(\pi\times\pi, \ui^n)\to H^n(\Hom_\zpp(C_\ast, I^n)),$$ is an isomorphism. 
\end{corollary}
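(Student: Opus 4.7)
The plan is to reduce the statement to a direct application of Corollary \ref{cor49}.

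First, I would unwind the left-hand side: by construction, $H^n_\D(\pi\times\pi, \ui^n)$ is the $n$-th cohomology of the cochain complex $\Hom_\od(\uc_\ast, \ui^n)$, where $\uc_\ast$ denotes the chosen projective $\od$-resolution of $\uz$. Thus it suffices to produce an isomorphism of cochain complexes $\Hom_\od(\uc_\ast, \ui^n) \cong \Hom_\zpp(C_\ast, I^n)$ and then take cohomology.

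Next, I would verify that the two hypotheses of Corollary \ref{cor49} hold in our setting. The coefficient module $\ui^n$ is principal by Corollary \ref{cor48}, using the assumption that $\pi$ is a principal group. For the chain complex $\uc_\ast$, I would invoke condition (a) of Definition \ref{defprincipal}: every free $\od$-module is principal. If $\uc_\ast$ happens to be a free resolution we are already done, and for a general projective resolution I would observe that a direct summand of a principal module is again principal (the defining isomorphism $\phi_H$ decomposes along any direct sum decomposition of the principal component), so every projective $\od$-module is principal. Consequently $\uc_\ast$ is a complex of principal $\od$-modules.

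Having established both hypotheses, Corollary \ref{cor49} gives an isomorphism of cochain complexes
\[
\Hom_\od(\uc_\ast, \ui^n) \;\cong\; \Hom_\zpp(C_\ast, I^n),
\]
and passing to $n$-th cohomology delivers the stated isomorphism. I do not foresee a genuine obstacle: essentially all the content has been packaged into Corollaries \ref{cor48} and \ref{cor49}, and the only small supplementary observation needed is that summands of principal modules remain principal, which is transparent from inspecting the map $\phi_H$ of Definition \ref{defprin}.
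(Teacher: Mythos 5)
Your argument is correct and follows essentially the same route as the paper: apply Corollary \ref{cor49} to the resolution, with $\ui^n$ principal by Corollary \ref{cor48} under the hypothesis that $\pi$ is principal. The only difference is cosmetic: the paper simply works with a free $\od$-resolution (free modules being principal by Definition \ref{defprincipal}(a)), whereas you add the easy observation that direct summands of principal modules are principal so that an arbitrary projective resolution is covered; this observation is correct, since $\phi_H$ respects the direct sum decomposition.
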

\begin{proof}
We apply Corollary \ref{cor49} to a $\od$-free resolution of $\uz$ noting that under our assumptions the Bredon module $\ui^n$ is principal (by Corollary \ref{cor48}). 
\end{proof}
\subsection{} Note that the complex $C_\ast$ which appears in Corollary \ref{isomorphism} is a resolution of $\Z$ over the ring $\zpp$ but it is neither free nor projective. Any projective resolution $P_\ast$ admits a chain map $P_\ast\to C_\ast$ and for any left $\zpp$-module $A$ we have a chain map
$\Hom_\zpp(C_\ast, A) \to \Hom_\zpp(P_\ast, A)$ (which is unique up to homotopy) inducing a well-defined homomorphism 
$$H^\ast(\Hom_\zpp(C_\ast, A))\to H^\ast(\Hom_\zpp(P_\ast, A))= H^\ast(\pi\times\pi, A).$$

\section{Essential cohomology classes}\label{essential}

The following notion was introduced and studied in \cite{FM}.

\begin{definition} Let $A$ be a left $\zpp$-module. 
A cohomology class $\beta\in H^n(\pi\times \pi, A)$ is said to be {\it essential} if there exists a homomorphism of $\Z[\pi\times \pi]$-modules $\mu: I^n \to A$ such that $$\mu_\ast(\vv^n)=\beta.$$  Here $\vv^n\in H^n(\pi\times\pi, I^n)$ denotes the $n$-th power of the canonical class $\vv$. 
\end{definition} 

In \cite{FM} the authors constructed a spectral sequence giving a full set of obstructions for a cohomology class to be essential. 
The first such obstruction is the requirement for the class $\beta\in H^n(\pi\times \pi, A)$ to be {\it a zero-divisor}, i.e. 
\begin{eqnarray}\label{divisor}\beta|_\pi=0\in H^n(\pi, A|_\pi)\end{eqnarray}
where $\pi\subset \pi\times\pi$ denotes the diagonal subgroup; see \cite{FM}, \S 5. The condition (\ref{divisor}) is obvious since the canonical class
$\vv$ and all its powers $\vv^n$ are zero-divisors. 

Here we characterise the essential cohomology classes as principal components of Bredon cohomology classes. 

\begin{theorem} \label{thm8}  Let $A$ be a left $\zpp$-module which is the principal component of an $\od$-module $\um$. 
Consider the homomorphism 
\begin{eqnarray}
\Phi: H^n_\D(\pi\times\pi, \um)\to H^n(\pi\times\pi, A)
\end{eqnarray} 
which associates to a Bredon cohomology class its principal component, see   (\ref{Phi}). 

(1) Any class $\beta\in H^n(\pi\times \pi, A)$ in the image of $\Phi$ is essential. 
 
 (2) If the group $\pi$ is principal then the set of essential cohomology classes coincides with the image on $\Phi$. 
\end{theorem}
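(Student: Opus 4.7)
The plan is to deduce both statements from a common mechanism: the universality of the canonical class $\uu^n$ (Theorem \ref{univ}) combined with the identification of its principal component $\Phi(\uu^n) = \vv^n$ (Theorem \ref{thm3}). For part (1), starting from an arbitrary Bredon class $\ualpha \in H^n_\D(\pi\times\pi, \um)$, I would apply Theorem \ref{univ} to produce an $\od$-morphism $\phi: \ui^n \to \um$ such that $\phi_\ast(\uu^n) = \ualpha$. Passing to principal components, $\phi$ descends to a $\zpp$-morphism $\mu: I^n \to A$, and since $\Phi$ is the natural transformation induced by restriction to the orbit $G/1$ it commutes with pushforward, so that
$$\Phi(\ualpha) = \Phi(\phi_\ast(\uu^n)) = \mu_\ast(\Phi(\uu^n)) = \mu_\ast(\vv^n).$$
This exhibits $\Phi(\ualpha)$ as essential, with no hypothesis on $\pi$ required.

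For part (2), I would run the same argument in reverse. Given an essential class $\beta = \mu_\ast(\vv^n)$ with $\mu: I^n \to A$, the plan is to lift $\mu$ to an $\od$-morphism $\phi: \ui^n \to \um$; then $\ualpha := \phi_\ast(\uu^n)$ satisfies $\Phi(\ualpha) = \beta$ by the same naturality computation used in part (1). The requisite lift is supplied by Lemma \ref{thm4}: since $\pi$ is principal, Corollary \ref{cor48} ensures that $\ui^n$ is a principal $\od$-module, so Lemma \ref{thm4} gives a bijection between $\od$-morphisms $\ui^n \to \um$ and $\zpp$-morphisms $I^n \to A$, provided $\um$ is itself principal.

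The main obstacle is handling the case when the ambient $\od$-module $\um$ is not principal, since Lemma \ref{thm4} requires principality on both source and target. The natural remedy is to work with the canonical principal extension of $A$, namely the $\od$-module whose value on $G/H$ is the fixed-point set $A^H$ (with transition morphisms induced from the $G$-action on $A$). The hypothesis of the theorem only requires $A$ to arise as the principal component of some $\od$-module, so one may take this canonical principal extension as $\um$ without loss of generality, and the argument of the previous paragraph then applies verbatim. A minor piece of bookkeeping to verify is the naturality identity $\Phi \circ \phi_\ast = \mu_\ast \circ \Phi$, which at the cochain level amounts to the functoriality of the principal-component functor $\um \mapsto \um(G/1)$ applied to the projective $\od$-resolution constructed in \S \ref{construction}.
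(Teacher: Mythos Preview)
Your proposal is correct and follows essentially the same route as the paper's proof: for part (1) you invoke Theorem~\ref{univ} and Theorem~\ref{thm3} together with naturality of the principal-component functor, and for part (2) you use Corollary~\ref{cor48} and Lemma~\ref{thm4}, replacing the given $\um$ by the principal $\od$-module $G/H\mapsto A^H$. The paper makes exactly this replacement (writing ``Let $\um$ denote the $\od$-module $\um(G/H)=A^H$''), so your observation that the hypothesis allows one to choose $\um$ to be the canonical principal extension is precisely the move the paper makes as well.
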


\begin{proof} Suppose that $\beta=\Phi(\alpha)$ where $\um\in H^n(\pi\times\pi, \um)$. By the Universality Theorem \ref{univ}, there exists a $\od$-homomorphism $\mu: \ui^n\to \um$ such that $\alpha=\mu_\ast(\uu^n)$. On the principal components we obtain a $\zpp$-homomorphism 
$\mu: I^n \to A$
such that $\mu_\ast(\vv^n)=\beta.$ 
Thus $\beta$ is essential. Here we used Theorem \ref{thm3} stating that the principal component of $\uu^n$ is $\vv^n$. This proves statement (1).

Suppose now that a cohomology class $\beta\in H^n(\pi\times\pi, A)$ is essential, i.e. $\beta=\mu_\ast(\vv^n)$ where $\mu: I^n\to A$ is a $\zpp$-homomorphism. 
Let $\um$ denote the $\od$-module 
$$\um(G/H) \, = \, A^H \, =\, [G/H, \, A]_G.$$
whose principal component is $A$.
Here we view $A$ as a left $G$-set and the brackets $[\, \, , \, \, ]_G$ denote the set of $G$-maps. Since we assume that $\pi$ is principal we know that 
$\od$-module $\ui^n$ is principal (see Corollary \ref{cor48}). Applying Lemma \ref{thm4} we obtain a $\od$-morphism 
$\hat\mu: \ui^n\to \um$ having $\mu$ as its principal component. 
This produces a Bredon cohomology class
$$\alpha=\hat\mu_\ast(\uu^n)\in H^n_\D(\pi\times\pi, \um),$$ and using Theorem \ref{thm3} we have $\Phi(\alpha)=\mu_\ast(\vv^n) =\beta$. 

This completes the proof. 
\end{proof}



\section{Examples of principal groups}\label{sec:examples}

In this section we show that all torsion free hyperbolic groups as well as all torsion free nilpotent groups are principal. Also, we give an example of a non-principal group. 

\begin{definition}\label{def:propN}
We say that a group $\pi$ satisfies Property $N$ if, for any $a\in \pi$ and any finite set $S \subset \pi$, 
the inclusion $a^n \in Z(S)$,  where $n\ge 1$, implies that $a\in Z(S).$
\end{definition}

%
%
%
%
%
%

\begin{proposition}\label{prop:propN}
Any group $\pi$ satisfying Property $N$ is principal.
\end{proposition}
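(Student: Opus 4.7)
The plan is to verify characterisation (c) in Definition \ref{defprincipal}: for any two finite subsets $S, S' \subset \pi$, the quotient $Z(S)/Z(S \cup S')$ is either infinite or trivial. This is the most direct route, since (c) is explicitly one of the equivalent forms of being principal, and Property $N$ is manifestly a statement about centralisers and powers.

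First I would set up the right algebraic observation: for any subset $T \subset \pi$, one has $Z(T) = Z(S) \cap Z(S')$ when $T = S \cup S'$, since an element commutes with every element of $T$ iff it commutes with every element of $S$ and every element of $S'$. Next, fix finite $S, S' \subset \pi$, set $T := S \cup S'$, and assume towards the desired dichotomy that $Z(S)/Z(T)$ is finite; the goal is to show it is trivial. Pick any $a \in Z(S)$. Since the quotient is finite, the coset $a Z(T)$ has finite order, so there exists $n \geq 1$ with $a^n \in Z(T)$.

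Now I would invoke Property $N$ directly, applied to the element $a$ and the finite set $T$: the inclusion $a^n \in Z(T)$ forces $a \in Z(T)$. Therefore $aZ(T)$ is the trivial coset, and since $a \in Z(S)$ was arbitrary we conclude $Z(S) = Z(T)$, i.e.\ $Z(S)/Z(S \cup S')$ is trivial. This verifies condition (c) of Definition \ref{defprincipal}, and hence $\pi$ is principal.

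There is no serious obstacle here; the only thing to be careful about is the direction in which Property $N$ is used (we need to apply it to $T$ rather than to $S$ alone, because the hypothesis $a^n \in Z(T)$ is what our finiteness assumption provides, and it is commuting with $S'$ that we need to propagate from $a^n$ down to $a$).
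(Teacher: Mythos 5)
Your proof is correct and follows essentially the same route as the paper: both verify condition (c) of Definition \ref{defprincipal} by applying Property $N$ to the set $S\cup S'$, using that $a^n\in Z(S\cup S')$ forces $a\in Z(S\cup S')$. The paper phrases the conclusion as ``the quotient is torsion free, hence trivial or infinite,'' while you argue ``if finite then trivial,'' which is the same observation.
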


\begin{proof}
We shall use the property (c) from Definition \ref{defprincipal}. To show that the group $\pi$ is principal we need to show that for any two finite subsets 
$S, S'\subset \pi$ the group $Z(S)/Z(S\cup S')$ is either trivial or infinite. This will follow once we show that this group is torsion free. An element of order $n$ in $Z(S)/Z(S\cup S')$ is represented by an element $a\in Z(S)$ such that $a^n\in Z(S\cup S')$. But then Property N implies $a\in Z(S\cup S')$ i.e. 
$a$ represents the trivial class in $Z(S)/Z(S\cup S')$. 

\end{proof}

\begin{proposition}\label{prop:propNnilp}
If $\pi$ is a finitely generated torsion free nilpotent group, then $\pi$ satisfies Property $N$ and therefore $\pi$ is principal.
\end{proposition}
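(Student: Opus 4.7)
The plan is to reduce Property $N$ to the classical fact that in a torsion free nilpotent group roots are unique, i.e.\ if $x^n=y^n$ for some $n\ge 1$ then $x=y$. Concretely, I would first observe that to verify Property $N$ it suffices to check the single-element case: if $a^n \in Z(\{s\})$ for some $s\in \pi$ and some $n\ge 1$, then $a\in Z(\{s\})$. Indeed, $a^n \in Z(S)$ means $a^n\in Z(\{s\})$ for every $s\in S$, and conversely. So the task becomes: if $a^n$ commutes with $s$, then $a$ commutes with $s$.

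The main step is the following computation. Assuming $a^n s = s a^n$, conjugation gives $(s a s^{-1})^n = s a^n s^{-1} = a^n$. If we can extract $n$-th roots uniquely, this forces $s a s^{-1} = a$, which is precisely $a\in Z(\{s\})$. So the whole proposition reduces to the uniqueness-of-roots property:
\begin{equation*}
x,y\in \pi, \quad n\ge 1, \quad x^n=y^n \ \Longrightarrow \ x=y.
\end{equation*}

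The hard part is justifying this uniqueness-of-roots property in a finitely generated torsion free nilpotent group. I would give an inductive argument on the nilpotency class $c$ of $\pi$ using the lower central series $\pi=\gamma_1\pi\supseteq \gamma_2\pi\supseteq\cdots\supseteq \gamma_{c+1}\pi=1$. The base case $c=1$ is just the fact that a finitely generated torsion free abelian group is torsion free, so $x^n=y^n$ implies $(xy^{-1})^n=1$ implies $xy^{-1}=1$. For the inductive step, one uses that $\pi/\gamma_c\pi$ has nilpotency class $c-1$ and is still torsion free (a standard lemma for finitely generated torsion free nilpotent groups), so the induction hypothesis gives $x\equiv y \pmod{\gamma_c \pi}$, say $x=yz$ with $z\in \gamma_c\pi$. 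Since $\gamma_c\pi$ is central in $\pi$, one computes $x^n=(yz)^n=y^n z^n$, and because $\gamma_c\pi$ is a finitely generated torsion free abelian group and $z^n=1$, one concludes $z=1$, hence $x=y$. Alternatively, one can simply cite Mal$'$cev's theorem that finitely generated torsion free nilpotent groups embed into a uniquely divisible nilpotent Lie group, where $n$-th roots are unique.

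Once uniqueness of roots is established, the reduction at the start gives Property $N$, and Proposition~\ref{prop:propN} then yields that $\pi$ is principal. The only subtle point I expect is the inductive step above, specifically the verification that $\gamma_c\pi$ is torsion free and that quotients by terms of the lower central series remain torsion free; these are well-known but worth stating explicitly in the proof.
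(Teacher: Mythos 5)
Your reduction of Property $N$ to uniqueness of roots is correct and clean: from $a^ns=sa^n$ you get $(sas^{-1})^n=sa^ns^{-1}=a^n$, and unique extraction of $n$-th roots forces $sas^{-1}=a$; since $Z(S)=\bigcap_{s\in S}Z(\{s\})$, this gives Property $N$, and Proposition \ref{prop:propN} finishes the job. If you justify unique roots by citing Mal$'$cev (torsion free nilpotent groups have unique roots, e.g.\ via the Mal$'$cev completion), the argument is complete and genuinely different from the paper's: the paper argues directly by induction on the nilpotency class, passing to $\bar\pi=\pi/Z(\pi)$ to deduce $[a,g]\in Z(\pi)$ for $g\in S$, and then using the commutator identity to get $[a^n,g]=[a,g]^n$ and conclude $[a,g]=1$ from torsion-freeness. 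Your route buys a one-line deduction from a quotable classical theorem; the paper's buys a self-contained elementary proof avoiding any appeal to Mal$'$cev theory.

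The self-contained induction you sketch for unique roots, however, has a genuine gap: it is false in general that $\pi/\gamma_c\pi$ is torsion free for a finitely generated torsion free nilpotent group $\pi$ of class $c$. For example, let $\pi=\langle x,y,z \mid [x,y]=z^2,\ [x,z]=[y,z]=1\rangle$ (a subgroup of the real Heisenberg group, hence torsion free, of class $2$). Here $\gamma_2\pi=[\pi,\pi]=\langle z^2\rangle$, so $\pi/\gamma_2\pi\cong \Z^2\times\Z_2$ has torsion, and your inductive hypothesis cannot be applied to this quotient. The true standard lemma concerns the upper central series: for $\pi$ torsion free nilpotent, $\pi/Z(\pi)$ is torsion free. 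So run your induction by quotienting by the centre (or by the isolator of $\gamma_c\pi$): if $x^n=y^n$, then $\bar x=\bar y$ in $\pi/Z(\pi)$ by induction, so $x=yz$ with $z\in Z(\pi)$ central, whence $x^n=y^nz^n$, $z^n=1$, and $z=1$ by torsion-freeness of $\pi$. With that repair your proof of unique roots is correct, and the resulting argument becomes close in spirit to the paper's, which likewise inducts via $\pi/Z(\pi)$ and likewise relies on the torsion-freeness of that quotient as a standard fact.
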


\begin{proof}
If $\pi$ is abelian, then $Z(S)=\pi$, so Property $N$ holds tautologically.
Suppose inductively that any finitely generated torsion free nilpotent group of class $<r$ satisfies Property $N$.
Take $\pi$ of class $r$ and let $a^n \in Z(S)$ for some $S$. Denote the quotient of $\pi$ by its centre by $\bar\pi = \pi/Z(\pi)$ and
note that: (1) The class of $\bar\pi$ is $<r$, so $\bar\pi$ satisfies Property $N$ and (2) $Z(S)$ maps
into $Z(\bar S)$ under the quotient map $\pi \to \bar\pi$. Then we see that $\bar a^n \in Z(\bar S)$
and, by Property $N$, we have $\bar a \in Z(\bar S)$. Let $g \in S$ so that $\bar g \in \bar S$. Then we see that
$[\bar a, \bar g]=1$ and this implies that $[a,g]\in Z(\pi)$. Let's now employ a basic relation among higher commutators
(which holds for any group [cf. Hall, 10.2.12]), 
$$[xy, z] = \big[ x, [y, z] \big]\, [y,z] \, [x,z].$$
Recall  that we have $[a^n, g] = 1$.  Expanding $[a^n, g]$ using the relation above gives
$$[a^n, g] = \big[ a^{n-1}, [a,g]  \big]\, [a,g] \, [a^{n-1},g] = [a,g] \, [a^{n-1},g],$$
where the last equality follows because $[a,g] \in Z(\pi)$.  Repeating this step eventually leads to $1 = [a^n, g] =  [a,g]^{n}$.  
Since $\pi$ is torsion free, we have $[a, g] = 1$ and $a\in Z(S)$. This completes the inductive step.
\end{proof}

\begin{lemma}\label{sur}
Let $\pi$ be a torsion free group such that the centraliser $Z(g)$ of any element $g\in \pi-\{1\}$  is cyclic. Then any two centralisers 
$Z(g_1)$, $Z(g_2)$, where $g_1, g_2\in \pi-\{1\}$, either coincide $Z(g_1)=Z(g_2)$ or their intersection is trivial, $Z(g_1)\cap Z(g_2)=\{1\}$.
\end{lemma}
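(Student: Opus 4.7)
The plan is to show that if the intersection $Z(g_1) \cap Z(g_2)$ contains a non-trivial element $h$, then both $Z(g_1)$ and $Z(g_2)$ must coincide with $Z(h)$. This reduces the proof to establishing a single equality of the form $Z(g) = Z(c)$ whenever $g$ is a non-trivial power of $c$.

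First, I would pick a non-trivial $h \in Z(g_1) \cap Z(g_2)$. Since $h$ commutes with both $g_1$ and $g_2$, we have $g_1, g_2 \in Z(h)$, and $Z(h)$ is cyclic by hypothesis. Since $\pi$ is torsion free, any non-trivial cyclic subgroup is infinite, so we may write $Z(h) = \langle c \rangle$ and find non-zero integers $i, j$ with $g_1 = c^i$ and $g_2 = c^j$.

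The main step is to prove $Z(g_1) = Z(c)$ (the argument for $g_2$ is identical). The inclusion $Z(c) \subseteq Z(c^i) = Z(g_1)$ is immediate. For the reverse inclusion, take any $x \in Z(g_1)$, so that $x c^i x^{-1} = c^i$. Both $c$ and $xcx^{-1}$ are therefore elements of the cyclic centraliser $Z(g_1) = Z(c^i)$, whose $i$-th powers agree. The torsion-freeness of $\pi$ forces $Z(g_1)$ to be infinite cyclic (it contains $g_1 \neq 1$), and in an infinite cyclic group the equation $y^i = c^i$ with $i \neq 0$ admits the unique solution $y = c$. Hence $xcx^{-1} = c$, i.e.\ $x \in Z(c)$, giving $Z(g_1) \subseteq Z(c)$.

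The same argument yields $Z(g_2) = Z(c)$, so $Z(g_1) = Z(g_2) = Z(h)$, as required. The only delicate point is the use of torsion-freeness to convert the cyclic centraliser into an infinite cyclic group and deduce uniqueness of $i$-th roots; without this, distinct elements of a cyclic group could share the same $i$-th power and the argument would collapse.
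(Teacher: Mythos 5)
Your proof is correct, but it reaches the conclusion by a different finishing move than the paper. The paper takes generators $a_1, a_2$ of the cyclic groups $Z(g_1), Z(g_2)$ and a generator $x$ of the (necessarily infinite cyclic) intersection, writes $x=a_1^{n_1}=a_2^{n_2}$, and observes that the cyclic group $Z(x)$ then contains both $a_1$ and $a_2$, so they commute and $\langle a_1\rangle=\langle a_2\rangle$. You instead place $g_1, g_2$ themselves inside $Z(h)=\langle c\rangle$ for a nontrivial $h$ in the intersection, and reduce everything to the claim $Z(c^i)=Z(c)$ for $i\neq 0$, proved via uniqueness of $i$-th roots in the infinite cyclic group $Z(c^i)$; the one step you leave implicit, that $xcx^{-1}\in Z(c^i)$, is immediate since $xcx^{-1}$ commutes with its own power $(xcx^{-1})^i=xc^ix^{-1}=c^i$, so there is no gap. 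It is worth noting that your key claim is essentially the singleton case of Property $N$ (Definition \ref{def:propN}) for this class of groups, which the paper only deduces afterwards, in Lemma \ref{ex2}, as a consequence of Lemma \ref{sur}; so your route proves that special case directly and derives the lemma from it, whereas the paper's route avoids root extraction by arguing with generators. Both arguments use the same two ingredients --- the cyclic-centraliser hypothesis applied to an auxiliary element, and torsion-freeness to make the relevant cyclic groups infinite --- and are comparably short.
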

\begin{proof} Let $a_i\in Z(g_i)$ be a generator, $i=1, 2$.
Assume that the intersection $Z(g_1)\cap Z(g_2)$ is not trivial. 
Then this intersection is an infinite cyclic group. Let $x\in Z(g_1)\cap Z(g_2)$ denote a generator of the intersection. Then 
\begin{eqnarray}\label{xg}
x=a_1^{n_1}=a_2^{n_2}
\end{eqnarray} for some $n_1, n_2\not=0$. 
Consider the centraliser $Z(x)\subset \pi$. It is an infinite cyclic group (by our assumption) which contains $a_1$ and $a_2$ (because of (\ref{xg})) implying that the elements $a_1$ and $a_2$ commute. 
Hence $Z(g_1)=Z(g_2).$

\end{proof}

\begin{lemma}\label{ex2}
Assume that a group $\pi$ is torsion free and the centraliser of any nontrivial element $g\in \pi$ is cyclic.  
Then $\pi$ satisfies property N and hence it is principal.
\end{lemma}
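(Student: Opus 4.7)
The plan is to verify Property N directly from the hypotheses using Lemma \ref{sur} as the main tool, then invoke Proposition \ref{prop:propN} to conclude principality.

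Suppose we are given $a\in \pi$ and a finite subset $S\subset \pi$ with $a^n\in Z(S)$ for some $n\ge 1$. I would like to show $a\in Z(S)$, which amounts to showing that $a$ commutes with every $s\in S$. I would first dispose of the trivial cases: if $a=1$ or $n=1$ or $s=1$, the conclusion is immediate, so I may assume $a\ne 1$, $n\ge 2$, and fix an arbitrary $s\in S$ with $s\ne 1$.

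The key observation is that $a^n$ is a simultaneous nontrivial element of the two cyclic centralisers $Z(a)$ and $Z(s)$. Since $\pi$ is torsion free and $a\ne 1$, we have $a^n\ne 1$. Obviously $a^n\in Z(a)$, and by hypothesis $a^n\in Z(s)$, so the intersection $Z(a)\cap Z(s)$ is nontrivial. By the hypothesis on $\pi$, both $Z(a)$ and $Z(s)$ are cyclic, so I am precisely in the setting of Lemma \ref{sur}, which forces $Z(a)=Z(s)$. In particular $a\in Z(a)=Z(s)$, so $a$ commutes with $s$. Since $s\in S$ was arbitrary, we obtain $a\in \bigcap_{s\in S} Z(s)=Z(S)$, establishing Property N. Principality of $\pi$ then follows directly from Proposition \ref{prop:propN}.

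I do not anticipate a genuine obstacle here; the argument is essentially a one-line application of Lemma \ref{sur} once one notices that $a^n$ is the required witness to the nontriviality of $Z(a)\cap Z(s)$. The only subtlety worth mentioning explicitly is the use of torsion-freeness to ensure $a^n\ne 1$, without which the whole argument would collapse (indeed, both the hypothesis ``centralisers are cyclic'' and torsion-freeness are needed to invoke Lemma \ref{sur}).
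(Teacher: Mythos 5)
Your proof is correct and takes essentially the same route as the paper: both apply Lemma \ref{sur} to the centraliser of $a$ and the centraliser(s) appearing in $Z(S)$, using $a^n$ as the witness to nontrivial intersection and torsion-freeness to rule out $a^n=1$. The only cosmetic difference is that the paper first collapses $Z(S)$ to a single $Z(g)$ (when nontrivial) before invoking Lemma \ref{sur}, while you compare $Z(a)$ with each $Z(s)$ individually; this changes nothing of substance.
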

\begin{proof} Let $S\subset \pi$ be a finite subset. By Lemma \ref{sur}, if $Z(S)$ is nontrivial then $Z(S)=Z(g)$ for some $g\in \pi-\{1\}$. 
If $a^n\in Z(S)$ then $a^n\in Z(a)\cap Z(g)$. We know that the centralisers $Z(a)$ and  $Z(g)$ either coincide or have trivial intersection. 
If $Z(a)=Z(g)$ then $a\in Z(g)=Z(S)$. In the case $Z(a)\cap Z(g)=1$ we obtain $a^n=1$ and hence $a=1$ since $\pi$ is torsion free. 
%
%
%
%
%
\end{proof}

\begin{corollary}
Any torsion free hyperbolic group is principal.
\end{corollary}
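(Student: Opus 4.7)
The plan is to reduce immediately to Lemma \ref{ex2}: it suffices to verify that in any torsion-free hyperbolic group $\pi$, the centraliser $Z(g)$ of every nontrivial element $g\in\pi$ is cyclic. Once this is established, Lemma \ref{ex2} (via Property $N$) gives the conclusion.

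The key input is the following classical fact from the theory of word-hyperbolic groups: if $\Gamma$ is hyperbolic and $g\in\Gamma$ has infinite order, then the centraliser $Z(g)$ is virtually cyclic. This is proved, for example, in Gromov's original paper or in Bridson--Haefliger; the idea is that $g$ acts on the Cayley graph as a hyperbolic isometry with a pair of fixed points on the Gromov boundary, and any element commuting with $g$ must preserve this fixed point set, forcing $Z(g)$ to embed, up to finite index, into the stabiliser of a pair of boundary points, which is virtually $\Z$.

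Now I would assemble the pieces as follows. Since $\pi$ is torsion-free and hyperbolic, every nontrivial $g\in\pi$ has infinite order, so by the cited fact $Z(g)$ is virtually infinite cyclic. A torsion-free virtually cyclic group is itself infinite cyclic (a finite-index infinite cyclic subgroup forces the ambient group to be a torsion-free extension of $\Z$ by a finite group, which in the torsion-free case must be trivial). Hence $Z(g)\cong\Z$ for every nontrivial $g$. Lemma \ref{ex2} then yields that $\pi$ satisfies Property $N$ and is therefore principal, completing the proof.

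The main obstacle is not computational but expository: one must decide how much of the hyperbolic-group background to cite versus prove. I would simply invoke the virtual cyclicity of centralisers of infinite-order elements in hyperbolic groups as a known result, since carrying out the boundary-dynamics argument in detail would be a digression from the paper's theme. Everything else is a one-line deduction from Lemma \ref{ex2}.
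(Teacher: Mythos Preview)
Your proposal is correct and follows the same route as the paper: reduce to Lemma \ref{ex2} by invoking the well-known fact that in a torsion-free hyperbolic group the centraliser of any nontrivial element is infinite cyclic. The paper's proof is a one-line citation of exactly this fact, while you give a bit more justification (virtual cyclicity plus torsion-freeness), but the argument is the same.
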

\begin{proof}
This follows from Lemma \ref{ex2} since in a torsion free hyperbolic group the centraliser of any non-unit element is cyclic. 
\end{proof}

As example of a group that is not principal we have the following: 

\begin{example} \label{Klein} {\rm Consider the fundamental group $K$ of the Klein bottle, 
$$K=\langle c, d; c^2 = d^2\rangle.$$
Denote $z=c^2=d^2;$ this element generates the centre $Z\subset K$. 
Denote $x=cd$, $y=dc$. 
Any element of $K$ can be uniquely written in one of the four forms
$$x^kz^l, \, y^kz^l,\,  x^kz^lc,\,  y^kz^ld, \quad k, l \in \Z.$$
Relations:
$$xy=yx=z^2$$
$$cx=yc$$
$$dx=yd$$
$$cy=xc$$
$$dy=xd$$
We see that the centraliser of $x$ is the subgroup generated by $x, y$ and $z$. Note that $Z(x)\subset K$ is normal. 
Besides, $c\notin Z(x)$ while $c^2=z\in Z(x)$. This shows that $K$ does not have 
property $N$. 

Besides, the centraliser of $xy=z^2$ is the whole group $K$. In this case the group $K/K\cap Z(x)=K/Z(x)$ is $\Z_2$. 
Consider the following two subgroups $H, H'\subset K\times K$. Let $H=\Delta\subset K\times K$ be the diagonal and let 
$H'$ be $H'= \{(a, xax^{-1}); a\in K\}$. Then $H\cap H'= \{(a, a); a\in Z(x)\}$ and hence $H/H\cap H'\simeq K/Z(x)\simeq \Z_2$. 
We conclude that the fundamental group of the Klein bottle $K$ is not principal. 
}
\end{example}

%
%
%

%
%
%
%

\end{document}